\newtheorem{theorem}{Theorem}[section]
\newtheorem{lemma}[theorem]{Lemma}
\newtheorem{corollary}[theorem]{Corollary}
\newtheorem{proposition}[theorem]{Proposition}
\theoremstyle{definition}
\newtheorem{definition}[theorem]{Definition}
\begin{document}

%\large

\title[$\Phi$-Harmonic Functions and $\ell^\Phi$-Cohomology]
{$\Phi$-Harmonic Functions on Discrete Groups \\ and the First $\ell^\Phi$-Cohomology}
\author{Yaroslav Kopylov}
\address{Sobolev Institute of Mathematics, Pr. Akad. Koptyuga 4,
630090, Novosibirsk, Russia and  Novosibirsk State University,
ul.~Pirogova 2, 630090, Novosibirsk, Russia }
\email{yakop@math.nsc.ru}
\author{Roman Panenko}
\address{Sobolev Institute of Mathematics, Pr. Akad. Koptyuga 4,
630090, Novosibirsk, Russia}
\email{panenkora@gmail.com}
\thanks{The first-named author was partially supported by the Russian Foundation for Basic Research 
(Grant~12-01-00873-a). Both authors were supported by the State Maintenance Program for the Leading 
Scientific Schools and Junior Scientists of the Russian Federation (Grants~NSh-921.2012.1, 
NSh-2263.2014.1)}

\subjclass{20J06, 43A07, 43A15}
\keywords{group, $N$-function, Orlicz space, $\Delta_2$-regularity, $\Phi$-harmonic
function, 1-cohomology.}
\date{}

\begin{abstract}
We study the first cohomology groups of a~countable discrete group~$G$ with coefficients
in a $G$-module $\ell^\Phi(G)$, where $\Phi$ is an $N$-function of class
$\Delta_2(0)\cap \nabla_2(0)$. In development of ideas of Puls and
Martin~---Valette, for a~finitely generated group~$G$, we introduce the discrete
$\Phi$-Laplacian and prove a theorem on the decomposition of the~space of $\Phi$-Dirichlet
finite functions into the direct sum of the spaces of $\Phi$-harmonic functions and
$\ell^\Phi(G)$ (with an appropriate factorization). We also prove that if a~finitely
generated group~$G$ has a~finitely generated infinite amenable subgroup with infinite
centralizer then $\overline{H}^{1}(G,\ell^{\Phi}(G)) = 0$. In conclusion, we show
the triviality of the first cohomology group for a~wreath product of two groups
one of which is nonamenable.
\end{abstract}

\maketitle

\section*{Introduction}

In the recent decades, the methods of harmonic analysis on discrete structures have been
widely used for the study of these structures as well as of manifolds and topological
groups. In particular, the study of amenability and Kazhdan's property~(T) for
a~locally compact group is reduced to checking these properties for any lattice
of the group (see, for example, \cite{Lub}). Many problems in the geometry of a~Riemannian
manifold satisfying the~doubling condition and Poincar\'e's inequality are reduced
to the study of some graph, called the {\it discretization} of the manifold~\cite{CoulSC95}.

The questions of the ``growth'' of  discrete groups ``at infinity'' and the~presence
of fixed points in a~continuous isometric affine action of a~group on a~Banach space
are closely related to the group 1-cohomology with coefficients in the~corresponding
Banach module (see, for example, \cite{BeHaV08, Puls06}).

The present article deals with sufficient conditions for the~triviality of the first
cohomology group $H^1(G,\ell^\Phi(G))$ of a~countable discrete group~$G$ for
a $\Delta_2$-regular $N$-function~$\Phi$. It is worth noting that earlier, 
in~\cite{LeMo09, LeMoMo10}, the $\Phi$-Laplacian was already considered on bounded domains 
in~$\Bbb R^n$, and Orlicz spaces in the context of~discrete groups were examined 
by Kami{\'n}ska and Musielak in~\cite{KamMus89}.

The article has the following structure:

In Sec.\,\ref{orl-s}, we give some basic notions concerning $N$-functions and the theory of discrete
Orlicz spaces. In Sec.\,\ref{1-cohom}, we recall the definitions of the 1-cohomology $H^1(G,V)$ and
the~reduced 1-cohomology $\overline{H}^1(G,V)$ of a~topological group~$G$ with coefficients
in a~Banach $G$-module $V$, expose a~number of~known assertions, and obtain a~sufficient
condition for the~triviality of the~cohomology $H^1(G,\ell^\Phi(G))$ for
a~$\Delta_2$-regular $N$-function~$\Phi$ in~terms of the~triviality of the 1-cohomology
of its nonamenable subgroup with some additional assumptions (Theorem~\ref{inf-norm}). 
In~Sec.~\ref{phi-harm},
by~analogy with the $p$-Laplacian and $p$-harmonic functions on finitely generated groups,
considered by Puls in~{Puls06} and Martin~--- Valette in~\cite{MV07} , we introduce the~notions
of the $\Phi$-Laplacian and a~$\Phi$-harmonic function for an~arbitrary $N$-function
$\Phi$ on a~finitely generated group. The~main result of~Sec.\,\ref{phi-harm} 
is Theorem~\ref{decomp} on the~decomposition of the~space of $\Phi$-Dirichlet finite functions, where
$\Phi$ is a~continuously differentiable strictly convex $N$-function belonging
to $\Delta_2(0)\cap\nabla_2(0)$ into the~sum of the space of $\Phi$-harmonic functions
and the~closure of~$\ell^\Phi(G)$. In~Sec\,\ref{central}, 
we use the results of~Sec.\,\ref{phi-harm} to establish
sufficient conditions for the~triviality of the~reduced 1-cohomology
$\overline{H}^1(G,\ell^\Phi(G))$ of a~finitely generated infinite group and prove a~condition
for the triviality of the 1-cohomology of a~wreath product of groups. We obtain analogs
of Theorems~4.2, 4.3, and~4.6 in~\cite{MV07} for the~case of $\ell^\Phi$-cohomology
(Theorem~\ref{large_centr}, Corollary~\ref{inf-center}, Theorem~\ref{wreath}).

%%%%%%%%%%%%%%%%%%%%%%%%%%%%%%%%%%%%%%%%%%%%%%%%%%%%%%%%%%%%%%%%%%%%%%%%%%%%%%%%%%%%%%%%%%%%%%%%
\section{Orlicz Spaces}\label{orl-s}

Here we recall some notions from the theory of Orlicz spaces which we will need in the sequel.
Below $X$ is a countable set endowed with a counting measure. 

%
%--------------------------------------------------------------------------[DEF1.21]-----------------------------------------------------------------------------------------------------------------------------------------------------------------------------------------	
%
\begin{definition}
A function $\Phi: \mathbb{R} \to \mathbb{R}$ is called an $N$\emph{-function} if it can 
be represented as 
$$
\Phi(x) = \int\limits_{0}^{|x|} \varphi(t)\, dt,
$$
where the function $\varphi(t)$ is defined for $t \geqslant 0$, non-decreasing, right-continuous,
$\varphi(t)>0$ if $t > 0$, $\varphi(0) = 0$ and  $\lim_{t \to \infty}\varphi(t) = \infty$. In what 
follows, $\Phi'$ stands for this function $\varphi$.

    An $N$-function $\Phi$ has the following properties: 
	\begin{itemize}
         	\item $\Phi(x)>0$ if $x > 0$;
		\item $\Phi$ is even and convex;
	        \item $\lim\limits_{x \to 0}\frac{\Phi(x)}{x} = 0$, 
$\lim\limits_{x \to \infty}\frac{\Phi(x)}{x} = +\infty$.
	\end{itemize}
\end{definition}
%
%--------------------------------------------------------------------------[DEF1.22]-------------------------------------------------------------------------------------------------------------------------------------------------------------------------------------------------
%
	\begin{definition} 
		If $\Phi$ is an $N$-function then the function given by 
	%$$\overline{\Phi}(x) = \int\limits_{0}^{x}\psi(t)\,dt,~~\mbox{where}~\psi(x) = (\Phi')^{-1}(x) = \sup\limits_{\Phi'(t) \le x} t$$
$$
\Psi(x) = \int\limits_{0}^{x}(\Phi')^{-1}(t) \,dt, \quad \mbox{where}~ (\Phi')^{-1}(x) = \sup\limits_{\Phi'(t) \le x} t,
$$
	 is called \emph{complementary to $\Phi$}.
	\end{definition}

If $\Psi$ is the $N$-function complementary to an $N$-function $\Phi$ then $\Phi$ is complementary to~$\Psi$.  

	%\begin{remark} 
		{\bf Remark. } For a pair $\Phi, \Psi$ of complementary $N$-functions, \emph{Young's inequality} 
$$
ab \le \Phi(a) + \Psi(b)
$$
holds for all nonnegative $a$ and $b$; it becomes an equality if and only if $b = \Phi'(a)$.
	%\end{remark}
%
%--------------------------------------------------------------------------[DEF1.23]---------------------------------------------------------------------------------------------------------------------------------------------------------------------------------------------------
%

\begin{definition}
An $N$-function $\Phi$ is said to satisfy the {\em $\Delta_2$-condition for small~$x$},
which is written as $\Phi\in\Delta_2(0)$, if there exist constants $x_0>0$,
$K>2$ such that $\Phi(2x)\le K \Phi(x)$ for $0\le x \le x_0$; 
$\Phi$ satisfies the {\em $\nabla_2$-condition for small~$x$}, which is denoted symbolically 
as $\Phi\in\nabla_2(0)$ if there are constants $x_0>0$ and
$c>1$ such that $\Phi(x)\le \frac{1}{2c}\Phi(cx)$ for $0\le x\le x_0$.
\end{definition}

Below we will call $N$-functions $\Phi\in \Delta_2(0)$ ($\Phi\in \nabla_2(0)$) 
{\em $\Delta_2$-regular} ({\em $\nabla_2$-regular}). As is well known,
an $N$-function is $\nabla_2$-regular if and only if the complementary
$N$-function is $\Delta_2$-regular.

%-----------------------------------------------------------------------------[DEF1.24]----------------------------------------------------------------------------------------------------------------------------------------------------------------------------------
	
\begin{definition}
An function $\Phi$ is called {\em uniformly convex} if, given any $a \in (0,1)$,
 there exists $\beta(a) \in (0,1)$ such that  
$$
\Phi\left(\frac{u + bu}{2}\right)\le\frac{1}{2}(1 - \beta(a))(\Phi(u)-\Phi(bu))
$$
for any $b \in [0,1]$  and $u \geqslant  0$.
\end{definition}

%--------------------------------------------------------------------------[DEF1.25]---------------------------------------------------------------------------------------------------------------------------------------------------------------------------------------------------
%
\begin{definition}	
The Orlicz class $\tilde\ell^{\Phi}(X)$ is the set of real-valued functions on~$X$ for which 
$$
\rho_\Phi(f) := \sum_{x \in X}{\Phi(f(x))}<\infty.
$$
\end{definition}

We will use the notation
$$
\tilde\ell^{\Phi}_1(X) =\left\{ f \in \tilde\ell^{\Phi}(X)~
{\Bigl | \Bigr.}~\sum_{x \in X}{\Phi(f(x))} \le 1 \right\}$$

%
%--------------------------------------------------------------------------[DEF1.26}------------------------------------------------------------------------------------------------------------------------------------------------------------------------------------------------------

\begin{definition}
The linear space 
$$
\ell^\Phi(X) = \{ f:X\to\mathbb{R} \,:\, \rho_\Phi(af)<\infty ~\text{for {\it some} $a>0$}\} 
$$
is called an {\it Orlicz space} on~$X$. 
\end{definition}

{\bf Remark.} As is well known (see, for example, \cite{RaRen02}), 
$\tilde\ell^{\Phi}(X)$ is a~linear space (and thus coincides with $\ell^\Phi(X)$) 
if and only if $\Phi \in \Delta_2(0)$.

%Let $\Psi$ be the complementary $N$-function to~$\Phi$.  

\begin{definition}
If $f\in \ell^\Phi(X)$ then the {\it Orlicz norm} of~$f$ is, by definition,  
$$
\|f\|_\Phi:=\|f\|_{\ell^{\Phi}(X)} := \sup_{u \in \tilde\ell^{\Psi}_{1}}
\left|\sum_{x \in X}f(x)u(x)\right|.
$$
\end{definition}

\begin{definition}
The {\it gauge} (or {\it Luxemburg}) {\it norm} of a function $f\in \ell^\Phi(X)$
is defined by the formula
$$
\|f\|_{(\Phi)}:= \|f\|_{\ell^{(\Phi)}(X)} :=
\inf \biggl\{ k>0 \, : \, \rho_\Phi\biggl(\frac{f}{k}\biggr)\le 1 \biggr\}.
$$
\end{definition}

It is well known that the Orlicz and gauge norms are equivalent, namely (see, for example,
\cite[pp.~61--62, Proposition~4]{RaRen91}):
$$
\|f\|_{(\Phi)} \le \|f\|_\Phi  \le  2 \|f\|_{(\Phi)}.
$$

It what follows, unless otherwise specified, we tacitly endow the Orlicz space 
$\ell^\Phi(X)$ with the gauge norm $\|\cdot\|_{(\Phi)}$.

%---------------------------------------------------------------------------[CLAIM]----------------------------------------------------------------------------------------------------------------------------------------------------------------------------------------------------

The following proposition is well known for Orlicz spaces on general measure spaces
(cf. \cite[Proposition~8, p.~79]{RaRen91}); we formulate and prove its ``countable'' 
version for the reader's convenience.
	
\begin{proposition}	\label{orl_class}
	 Let $\Phi \in \Delta_2(0)$ and let $\Psi$ be the complementary $N$-function to~$\Phi$.
If $f \in \tilde\ell^{\Phi}(X)$ then $\Phi'(|f|) \in \tilde\ell^{\Psi}(X)$.  
\end{proposition}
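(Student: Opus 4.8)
The plan is to reduce everything to a single pointwise inequality, $\Psi(\Phi'(a))\le\Phi(2a)$, and then to sum it using the $\Delta_2(0)$ hypothesis, treating separately the finitely many points where $|f|$ is large. The natural starting point is the equality case of Young's inequality recorded in the Remark above: taking $b=\Phi'(a)$ turns $ab\le\Phi(a)+\Psi(b)$ into the identity $a\Phi'(a)=\Phi(a)+\Psi(\Phi'(a))$, so that $\Psi(\Phi'(a))=a\Phi'(a)-\Phi(a)\le a\Phi'(a)$ for every $a\ge 0$.

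First I would bound $a\Phi'(a)$ by $\Phi$ evaluated at a dilated argument. Since $\Phi'=\varphi$ is nondecreasing, $a\Phi'(a)=\int_a^{2a}\Phi'(a)\,dt\le\int_a^{2a}\Phi'(t)\,dt\le\int_0^{2a}\Phi'(t)\,dt=\Phi(2a)$. Combined with the previous line, this yields the key pointwise estimate $\Psi(\Phi'(a))\le\Phi(2a)$, valid for all $a\ge0$. Applying it with $a=|f(x)|$ gives $\Psi(\Phi'(|f(x)|))\le\Phi(2|f(x)|)$ at every $x\in X$.

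Next I would split $X$ according to the size of $f$. Let $x_0>0$ and $K>2$ be the constants of the $\Delta_2(0)$ condition, so that $\Phi(2a)\le K\Phi(a)$ for $0\le a\le x_0$, and set $X_{\mathrm{small}}=\{x:|f(x)|\le x_0\}$ and $X_{\mathrm{big}}=\{x:|f(x)|>x_0\}$. On $X_{\mathrm{small}}$ the estimate gives $\Psi(\Phi'(|f(x)|))\le\Phi(2|f(x)|)\le K\Phi(|f(x)|)$, whence $\sum_{x\in X_{\mathrm{small}}}\Psi(\Phi'(|f(x)|))\le K\sum_{x\in X}\Phi(f(x))=K\rho_\Phi(f)<\infty$ because $f\in\tilde\ell^\Phi(X)$. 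The set $X_{\mathrm{big}}$ is finite: each of its points contributes at least $\Phi(x_0)>0$ to the convergent sum $\rho_\Phi(f)$, so it can contain only finitely many points; since $\Phi'$ and $\Psi$ are everywhere finite, $\sum_{x\in X_{\mathrm{big}}}\Psi(\Phi'(|f(x)|))$ is a finite sum of finite terms. Adding the two contributions gives $\rho_\Psi(\Phi'(|f|))<\infty$, i.e. $\Phi'(|f|)\in\tilde\ell^\Psi(X)$.

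The only genuinely delicate point is the large-value set: the $\Delta_2(0)$ condition controls $\Phi(2a)$ only for small $a$, so the bound $\Phi(2a)\le K\Phi(a)$ fails in general for $a>x_0$, and one really must argue that $f\in\tilde\ell^\Phi(X)$ forces $\{|f|>x_0\}$ to be finite. This is exactly where the counting-measure setting simplifies matters relative to the general-measure statement in \cite{RaRen91}: finiteness of the exceptional set replaces the more elaborate truncation estimates needed on a non-atomic measure space.
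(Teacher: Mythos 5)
Your proof is correct and follows essentially the same route as the paper's: the equality case of Young's inequality, the estimate $a\Phi'(a)\le\Phi(2a)$ obtained by comparing with $\int_a^{2a}\Phi'(t)\,dt$, the $\Delta_2(0)$ condition on the set where $|f|$ is small, and a separate treatment of the large-value set. If anything, you are slightly more explicit than the paper, which absorbs the sum over $\{|f(x)|>\varepsilon\}$ into a constant $C$ without spelling out that this set must be finite because each of its points contributes at least $\Phi(\varepsilon)>0$ to the convergent sum $\rho_\Phi(f)$.
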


\begin{proof}
Let $f \in \ell^{\Phi}(X)$. Applying the equality in Young's inequality to $|f|$ 
and $\Phi'(|f|)$, we have 
\begin{equation}\label{young-iden}
\Phi(f) + \Psi(\Phi'(|f|)) = |f|\Phi'(|f|).
\end{equation}
Estimate $x\Phi(x)$ in a suitable neighborhood of zero. By definition, 
$\Phi(x) = \int\limits_{0}^{x} \varphi(t)\, dt $.  Further, 
$$
x\Phi'(x) = \int\limits_{x}^{2x} \varphi(x)\, dt \le  \int\limits_{x}^{2x} \varphi(t)\, dt 
\le  \int\limits_{0}^{2x} \varphi(t)\, dt = \Phi(2x)
$$
but we have $\Phi(2x) \le K\Phi(x)$ in view of the $ \Delta_2(0)$-condition. 
Consequently, we conclude $x\Phi'(x) \le K\Phi(x)$ in a suitable neighborhood of zero .
Involving~(\ref{young-iden}), we obtain $\Psi(\Phi'(|f|)) \le( K - 1)\Phi(|f|)$. 		
Then, choosing $\varepsilon$  such that $x\Phi'(x) \le K\Phi(x)$ for all  
$x \in [0, ~\varepsilon]$, we have  
	%$$
		% \sum_{x \in X}\Psi(\Phi'(|f(x)|)) = \sum_{|f(x)| \le \varepsilon}\Psi(\Phi'(|f(x)|)) + \sum_{|f(x)| > \varepsilon}\Psi(\Phi'(|f(x)|)) = 
		% \sum_{|f(x)| \le \varepsilon}\Psi(\Phi'(|f(x)|)) + C \le \sum_{|f(x)| \le \varepsilon}( K - 1)\Phi(|f(x)|)
	%$$
	\begin{multline*}
\sum_{x \in X}\Psi(\Phi'(|f(x)|)) = \sum_{|f(x)| \le \varepsilon} 
\Psi(\Phi'(|f(x)|)) + \sum_{|f(x)| > \varepsilon}\Psi(\Phi'(|f(x)|))  \\
=\sum_{|f(x)| \le \varepsilon}\Psi(\Phi'(|f(x)|)) + C 
\le \sum_{|f(x)| \le \varepsilon}( K - 1)\Phi(|f(x)|) + C < \infty.	
	\end{multline*}
\end{proof}

%
%%%%%%%%%%%%%%%%%%%%%%%%%%%%%%%%%%%%%%%%%%%%%%%%%%%%%%%%%%%%%%%%%%%%%%%%%%%%%%%%%%%%%%%%%%%%%%%%
\section{1-Cohomology}\label{1-cohom}

Let $G$ be a topological group and let~$V$ be a~topological $G$-module, i.e., a real or complex
topological vector space endowed with a~linear representation $\pi:G\times V\to V$, 
$(g,v)\mapsto \pi(g)v$. The space~$V$ is called a {\it Banach $G$-module} if $V$ is a~Banach 
space and $\pi$ is a~representation of~$G$ by isometries of~$V$. Introduce the notation:
$$
Z^1(G,V):=\{b:G\to V \text{~continuous~} \mid b(gh)=b(g) + \pi(g)b(h) \} \quad 
\text{({\it 1-cocycles})}; 
$$
$$
B^1(G,V) = \{ b\in Z^1(G,V) \mid (\exists v\in V)\, (\forall g\in G) \,\, b(g)= \pi(g) v - v \}
\quad \text{({\it 1-coboundaries})};
$$
$$
H^1(G,V) = Z^1(G,V)/B^1(G,V) \quad \text{({\it 1-cohomology with coefficients in~$V$})}.
$$

Endow $Z^1(G,V)$ with the topology of uniform convergence on compact subsets of~$G$ and 
denote by $\overline{B}^1(G,V)$ the closure of $B^1(G,V)$ in this topology. The quotient 
$\overline{H}^1(G,V) = Z^1(G,V)/\overline{B}^1(G,V)$ is called the 
{\it reduced 1-cohomology} of~$G$ with coefficients in the $G$-module~$V$. 

Let $V$ be a Banach $G$-module.
We say that $V$ \textit{has almost invariant vectors} or {\it almost has invariant vectors}
if, for every compact subset $F\subset G$ and every $\varepsilon>0$, there exists a~unit 
vector $v\in V$ such that $\|\pi(g)v-v\|\le\varepsilon$ for all $g\in F$.

Given a~closed normal subgroup~$N$ in~$G$ and a $G$-module $V$, the group $G$ acts on~$H^1(N,V|_N)$ 
as follows~(see~\cite{Bro82,Gui80}: On~$Z^1(N,V|_N)$, the action is defined by the~formula
$$
(g\cdot b)(n) = \pi(g)(b(g^{-1}ng)) 
$$
($b\in Z^1(N,V|_N)$, $g\in G$, $n\in N$). Since this action leaves $B^1(N,V|_N)$ invariant,
it defines an~action of~$G$ on~$H^1(N,V|_N)$. Since, for $m\in N$,
$$
(m\cdot b)(n) = b(n) + (\pi(n) b(m) - b(m)),
$$
the action of~$N$ on~$H^1(N,V|_N)$ is trivial, the action of~$G$ 
on~$H^1(N,V|_N)$ factors through~$G/N$. The following assertion is proved 
in~\cite[Corollary~6.4]{Bro82} and \cite[8.1]{Gui80}:

\begin{proposition}\label{rest} 
(1) There is an exact sequence 
\begin{equation*}
0 \rightarrow H^1(G/N,V^N) \overset{i_*}{\rightarrow} H^1(G,V) 
\overset{\mathrm{Rest}_G^N}{\longrightarrow} 
H^1(N,V|_N)^{G/N} \rightarrow \dots,
\end{equation*}
where $i:V^N\to V$ is the inclusion and $\mathrm{Rest}_G^N:H^1(G,V)\to H^1(N,V|_N)^{G/N}$ 
is the restriction map.

(2) If $V^N=0$ then $\mathrm{Rest}_G^N:H^1(G,V)\to H^1(N,V|_N)^{G/N}$ 
is an~isomorphism.
 
\end{proposition}

In~\cite{Kop13}, we obtained analogs of Corollary~2.4 in~\cite{MV07} 
and Proposition~2 in~\cite{BMV} in the context of locally compact groups. 
Here are the discrete versions of the results of~\cite{Kop13}:

\begin{proposition}\label{or-cohom} \cite[Proposition~2]{Kop13}
Suppose that $\Phi$ is an $N$-function of class~$\Delta_2(0)$.
If $G$ is a countable group then the following 
are equivalent:
           
{\rm(i)} $H^1(G,\ell^\Phi(G)) = \overline{H}^1(G,\ell^\Phi(G))$;

{\rm(ii)} $G$ is not amenable.
\end{proposition}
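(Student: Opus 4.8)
The plan is to reformulate condition (i) as a closedness statement and then tie that closedness to amenability through almost invariant vectors in the left regular representation $\lambda$ of $G$ on $\ell^\Phi(G)$. Since $\overline{H}^1(G,\ell^\Phi(G))=Z^1/\overline{B}^1$ while $H^1=Z^1/B^1$ and $B^1\subseteq\overline{B}^1$, condition (i) is equivalent to $B^1(G,\ell^\Phi(G))$ being closed in $Z^1(G,\ell^\Phi(G))$ for the topology of uniform convergence on finite subsets of $G$. We may take $G$ infinite (for finite $G$ the constants are invariant and the assertion is read in the infinite case); then no nonzero constant lies in $\ell^\Phi(G)$, so $\ell^\Phi(G)^G=0$ and the coboundary map $T\colon\ell^\Phi(G)\to Z^1$, $Tv=(\lambda(\cdot)v-v)$, is injective and $\|\cdot\|_{(\Phi)}$-continuous into the Fr\'echet space $Z^1$, whose topology is given by the seminorms $p_F(b)=\max_{g\in F}\|b(g)\|_{(\Phi)}$ over finite $F\subset G$. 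The goal becomes: $B^1=T(\ell^\Phi(G))$ is closed if and only if $\lambda$ does not almost have invariant vectors.

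For this key lemma I prove both implications. If $\lambda$ has no almost invariant vectors, there are a finite $F\subset G$ and $\varepsilon>0$ with $\max_{g\in F}\|\lambda(g)v-v\|_{(\Phi)}\ge\varepsilon\|v\|_{(\Phi)}$ for all $v$, by homogeneity. Given $Tv_n\to b$ pointwise, each $\lambda(g)v_n-v_n$ is Cauchy, so applying the inequality to $v_n-v_m$ shows that $(v_n)$ is Cauchy; its limit $v$ satisfies $b=Tv\in B^1$, so $B^1$ is closed. Conversely, if $B^1$ is closed then $T$ is a continuous injection of the Banach space $\ell^\Phi(G)$ onto the closed, hence Fr\'echet, subspace $B^1$ of $Z^1$; by the open mapping theorem $T^{-1}$ is continuous at $0$, which furnishes a finite $F$ and $\delta>0$ with the implication $\max_{g\in F}\|\lambda(g)v-v\|_{(\Phi)}<\delta\Rightarrow\|v\|_{(\Phi)}<1$, i.e.\ a uniform lower bound that rules out almost invariant vectors.

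It remains to show that $\lambda$ almost has invariant vectors if and only if $G$ is amenable. If $G$ is amenable, take a F\o lner sequence $(F_n)$ and normalized indicators $v_n=\chi_{F_n}/\|\chi_{F_n}\|_{(\Phi)}$; since $\|\chi_E\|_{(\Phi)}=1/\Phi^{-1}(1/|E|)$, one gets $\|\lambda(g)v_n-v_n\|_{(\Phi)}=\Phi^{-1}(1/|F_n|)/\Phi^{-1}(1/|gF_n\triangle F_n|)$. Iterating the $\Delta_2(0)$-condition yields $\Phi^{-1}(s)\ge c\,(s/t)^{1/\gamma}\Phi^{-1}(t)$ for $t\le s$ small, with $\gamma=\log_2 K$, so this ratio is $\lesssim(|gF_n\triangle F_n|/|F_n|)^{1/\gamma}\to 0$, and finitely many $g$ are handled at once by a single large $F_n$. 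For the converse I transport almost invariant vectors to $\ell^1(G)$ by the map $f\mapsto\Phi(|f|)$, which sends unit vectors to vectors of $\ell^1$-norm $\rho_\Phi(v)\asymp 1$; using $|\Phi(a)-\Phi(b)|\le(\Phi'(a)+\Phi'(b))|a-b|$, Young's inequality, and Proposition~\ref{orl_class} (so that $\Phi'(|v_n|)$ has bounded $\Psi$-norm), one obtains $\|\lambda(g)\Phi(|v_n|)-\Phi(|v_n|)\|_1\le C\|\lambda(g)v_n-v_n\|_{(\Phi)}\to 0$; the resulting almost invariant vectors in $\ell^1(G)$ are precisely Reiter's condition, hence $G$ is amenable.

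Chaining the three equivalences gives (i) $\iff B^1$ closed $\iff\lambda$ has no almost invariant vectors $\iff G$ not amenable $\iff$ (ii). I expect the main obstacle to be the converse of the last step, deducing amenability from almost invariant vectors in $\ell^\Phi(G)$, where the Orlicz Mazur map $f\mapsto\Phi(|f|)$ and its continuity estimate must be controlled via $\Delta_2(0)$; secondary care is needed in handling the non-normable pointwise topology on $Z^1$, resolved above through the Fr\'echet open mapping theorem, and in the behavior of $\Phi^{-1}$ near $0$ governed by $\Delta_2(0)$.
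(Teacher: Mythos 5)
The paper itself contains no proof of this proposition: it is imported verbatim from \cite[Proposition~2]{Kop13}, with the representation-theoretic half restated separately as Proposition~\ref{amenab-subgr} (Rao's theorem that the left regular representation on $\ell^\Phi(G)$ almost has invariant vectors iff $G$ is amenable). Your argument is correct and is precisely the standard route underlying that citation: the chain (i) $\Leftrightarrow$ $B^1$ closed $\Leftrightarrow$ no almost invariant vectors $\Leftrightarrow$ nonamenability, where the middle equivalence is Guichardet's criterion (your open-mapping argument in the Fr\'echet space $Z^1$ is the classical proof from \cite{Gui72}, and it legitimately needs $\ell^\Phi(G)^G=0$, i.e.\ $G$ infinite --- your remark that the statement is to be read for infinite $G$ is right, since for finite $G$ both cohomologies vanish while $G$ is amenable). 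What your write-up adds is a self-contained proof of the last equivalence, which the paper simply quotes: the F\o lner direction via $\|\chi_E\|_{(\Phi)}=1/\Phi^{-1}(1/|E|)$ and the iterated $\Delta_2(0)$ bound $\Phi^{-1}(K^m u)\ge 2^m\Phi^{-1}(u)$, and the converse via the Orlicz Mazur map $f\mapsto\Phi(|f|)$ into $\ell^1$ and Reiter's condition. Two small repairs are needed there, neither fatal. First, the uniform bound $\|\Phi'(|v_n|)\|_\Psi\le C$ over unit vectors does not follow from the \emph{statement} of Proposition~\ref{orl_class} (which concerns one fixed function) but does follow from its \emph{proof}: $\Psi(\Phi'(x))\le(K-1)\Phi(x)$ for $x\le\varepsilon$, together with the facts that a gauge-norm unit vector satisfies $\sup_x|v_n(x)|\le\Phi^{-1}(1)$ and has at most $1/\Phi(\varepsilon)$ coordinates exceeding $\varepsilon$, giving a uniform modular bound $\rho_\Psi(\Phi'(|v_n|))\le(K-1)+\Psi(\Phi'(\Phi^{-1}(1)))/\Phi(\varepsilon)$. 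Second, in the F\o lner computation the argument $1/|gF_n\triangle F_n|$ need not lie in the small-$x$ region where $\Delta_2(0)$ iterates; split off the case $|gF_n\triangle F_n|\le M$, where the denominator $\Phi^{-1}(1/|gF_n\triangle F_n|)$ is bounded below while the numerator $\Phi^{-1}(1/|F_n|)\to 0$ (you also need $\rho_\Phi(v_n)$ bounded away from $0$ for the Reiter normalization, which is exactly the paper's displayed fact $\lim_{\rho_\Phi(v)\to 0}\|v\|_{(\Phi)}=0$). With these routine insertions your proof is complete and matches the intended one.
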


\begin{proposition}\label{amenab-subgr} \cite[Theorem~1]{Kop13}
Assume that $\Phi$ is an $N$-function of class~$\Delta_2(0)$.
Let $G$ be a countable group and let $H$ be an infinite subgroup in $G$. 
The following are equivalent:

{\rm(i)} The permutation representation of $H$ on $\ell^\Phi(G)$ has almost invariant 
vectors;

{\rm(ii)} $H$ is amenable.
\end{proposition}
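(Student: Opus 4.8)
The plan is to establish the two implications separately, after reducing the permutation representation $\lambda_G|_H$ on $\ell^\Phi(G)$ to the left regular representation of $H$ on $\ell^\Phi(H)$. Writing $G=\bigsqcup_i Hx_i$ as a union of right cosets, left translation by $H$ preserves each $Hx_i$, and the bijection $h\mapsto hx_i$ intertwines the restriction of $\lambda_G|_H$ to $\ell^\Phi(Hx_i)$ with the left regular representation of $H$ on $\ell^\Phi(H)$. Thus $\ell^\Phi(Hx_0)$ is an $H$-invariant isometric copy of the regular representation inside $\ell^\Phi(G)$: a unit almost invariant vector for the latter yields one in $\ell^\Phi(G)$, and conversely a global almost invariant vector restricts to the cosets. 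Throughout, $\lambda$ denotes left translation.

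For \emph{(ii)}$\Rightarrow$\emph{(i)} I would use F\o{}lner sets. Since $\Phi$ is a strictly increasing homeomorphism of $[0,\infty)$ with inverse $\Phi^{-1}$, the gauge norm of an indicator is $\|\mathbf 1_B\|_{(\Phi)}=1/\Phi^{-1}(1/|B|)$, so for a finite $A\subseteq H$ the normalized vector $f=\mathbf 1_A/\|\mathbf 1_A\|_{(\Phi)}$ satisfies
\[
\|\lambda(h)f-f\|_{(\Phi)}=\frac{\Phi^{-1}(1/|A|)}{\Phi^{-1}(1/|hA\,\triangle\,A|)}.
\]
Given a finite $F\subseteq H$ and $\varepsilon>0$, amenability provides $A$ with $|hA\,\triangle\,A|\le\delta|A|$ for all $h\in F$, and $|A|$ as large as we wish. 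The key point is a uniform estimate from $\Delta_2(0)$: rewriting $\Phi(2x)\le K\Phi(x)$ as $\Phi^{-1}(Kt)\ge2\Phi^{-1}(t)$ for small $t$ and iterating gives $\Phi^{-1}(st)\le Cs^{\alpha}\Phi^{-1}(t)$ with $\alpha=\log2/\log K\in(0,1)$, for $t$ below a fixed threshold. Hence on the range where $|hA\,\triangle\,A|$ is large the displayed ratio is at most $C\delta^{\alpha}$, while the finitely many cases of bounded boundary are controlled by taking $|A|$ large so that $\Phi^{-1}(1/|A|)$ is small. Choosing $\delta$ with $C\delta^{\alpha}<\varepsilon$ produces the required almost invariant unit vector.

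For \emph{(i)}$\Rightarrow$\emph{(ii)} I would pass from almost invariant unit vectors $f_n\in\ell^\Phi(G)$ to almost invariant vectors in $\ell^1(H)$ and invoke Reiter's criterion. Put $\phi_n=\Phi(|f_n|)\ge0$; then $\|\phi_n\|_1=\rho_\Phi(f_n)\in(0,1]$ is bounded away from $0$, since modular convergence to $0$ would force $\|f_n\|_{(\Phi)}\to0$. The pointwise convexity bound $|\Phi(|a|)-\Phi(|b|)|\le\Phi'(|a|+|b|)\,|a-b|$ together with H\"older's inequality in Orlicz duality yields
\[
\|\lambda(h)\phi_n-\phi_n\|_1\le2\,\|\Phi'(|f_n|+|\lambda(h)f_n|)\|_{(\Psi)}\,\|\lambda(h)f_n-f_n\|_{(\Phi)},
\]
where $\Psi$ is complementary to $\Phi$. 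By Proposition~\ref{orl_class} the first factor is finite, and it is bounded \emph{uniformly} in $n$ and $h\in F$: since $\rho_\Phi(f_n)\le1$ forces $|f_n|\le\Phi^{-1}(1)$ pointwise, the modular $\rho_\Psi(\Phi'(|f_n|+|\lambda(h)f_n|))$ is controlled by $\rho_\Phi(|f_n|+|\lambda(h)f_n|)$, itself bounded via $\Delta_2(0)$. Hence the right-hand side tends to $0$. Projecting $\phi_n$ onto $H$ along the coset decomposition gives nonnegative, almost invariant vectors in $\ell^1(H)$ of norm bounded below, i.e. Reiter's condition, so $H$ is amenable.

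The main obstacle is the quantitative analysis of $\Phi^{-1}$ under $\Delta_2(0)$ in the first implication, namely the uniform comparison $\Phi^{-1}(st)\le Cs^{\alpha}\Phi^{-1}(t)$ and the separate treatment of F\o{}lner sets with bounded boundary; in the second implication the delicate point is the \emph{uniform} bound on $\|\Phi'(|f_n|+|\lambda(h)f_n|)\|_{(\Psi)}$, where the pointwise estimate $|f_n|\le\Phi^{-1}(1)$ is precisely what removes the $\Phi$-dependent additive constant left open by Proposition~\ref{orl_class}.
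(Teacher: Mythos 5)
First, a point of comparison you could not have known: the paper contains \emph{no proof} of Proposition~\ref{amenab-subgr} --- it is imported verbatim from \cite[Theorem~1]{Kop13}, with the remark that the case $H=G$ is Rao's \cite[Proposition~2]{Ra04a}. So your proposal can only be measured against the standard argument those sources follow, and it does follow it: F\o{}lner sets and normalized indicators for (ii)$\Rightarrow$(i), and the nonlinear map $f\mapsto\Phi(|f|)$ into $\ell^1$ followed by Reiter's criterion for (i)$\Rightarrow$(ii), exactly the Orlicz analogue of the classical $\ell^p$ argument where one uses $|f|^p$. Your individual steps check out: the formula $\|\mathbf 1_B\|_{(\Phi)}=1/\Phi^{-1}(1/|B|)$ is correct; the iterated $\Delta_2(0)$ estimate $\Phi^{-1}(st)\le 2s^{\alpha}\Phi^{-1}(t)$ for $t$ below the threshold $\Phi(x_0)$ is valid (each iteration step stays below the threshold since $K>1$), and your two-regime split --- $|hA\triangle A|$ large versus bounded, the latter killed by taking $|A|$ large, which is available since any F\o{}lner sequence of an infinite group has $|A_n|\to\infty$ --- closes the first implication. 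In the converse, the lower bound $\|\phi_n\|_1\ge c>0$ via the norm--modular equivalence of convergence (the paper itself invokes \cite[Theorem~9.4]{KraRu} for this in Theorem~\ref{decomp}), the convexity estimate, Orlicz--H\"older with constant $2$, and the coset-fibered summation $\ell^1(G)\to\ell^1(H)$ (norm-preserving on nonnegative functions, norm-nonincreasing and equivariant in general) are all sound.

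One step should be made explicit, though you correctly identified it as the delicate point. The uniform bound on $\rho_\Psi\bigl(\Phi'(g_n)\bigr)$, $g_n=|f_n|+|\lambda(h)f_n|$, does \emph{not} follow from the pointwise bound $g_n\le 2\Phi^{-1}(1)$ together with ``control by $\rho_\Phi(g_n)$'' alone: the inequality $\Psi(\Phi'(x))\le (K-1)\Phi(x)$ extracted in Proposition~\ref{orl_class} holds only for $x\le\varepsilon$, and likewise $\Phi(2x)\le K\Phi(x)$ only near zero, so neither $\rho_\Psi(\Phi'(g_n))\le(K-1)\rho_\Phi(g_n)$ nor even a uniform bound on $\rho_\Phi(g_n)$ is literally true without a second ingredient: a \emph{uniform count} of the exceptional points. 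That count is supplied by Chebyshev from $\rho_\Phi(f_n)\le 1$: the number of $x$ with $g_n(x)>\varepsilon$ is at most $2/\Phi(\varepsilon/2)$, each such point contributing at most $\Psi\bigl(\Phi'(2\Phi^{-1}(1))\bigr)$ thanks to your pointwise bound, while the remaining points obey the small-argument inequalities. So the $\Phi$-dependent additive constant of Proposition~\ref{orl_class} is removed by the pointwise bound \emph{and} the cardinality bound together; with that sentence inserted, your proof is complete and coincides in strategy with the cited one.
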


For $H=G$ this was proved by Rao (see~\cite[Proposition~2, pp.~387--389]{Ra04a}).  

\begin{lemma}\label{cohom-rest}
Let $\Phi$ be an $N$-function of class $\Delta_2(0)$ and let $H$ be a subgroup of a countable 
discrete group $G$. Consider the properties

{\rm(i)} $\overline{H}^1(H,\ell^\Phi(H))=0$;

{\rm(ii)} $\overline{H}^1(H,\ell^\Phi(G)|_H)=0$;

{\rm(i$'$)} $H^1(H,\ell^\Phi(H))=0$;

{\rm(ii$'$)} $H^1(H,\ell^\Phi(G)|_H)=0$.

Then {\rm(i)}$\Longleftrightarrow${\rm(ii)} and {\rm(i$'$)}$\Longleftrightarrow${\rm(ii$'$)} 
\end{lemma}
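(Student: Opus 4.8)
The plan is to reduce everything to the coset decomposition of $\ell^\Phi(G)|_H$ into copies of the regular $H$-module, treating the reduced statement by a truncation argument and the unreduced one by an amenability reduction.

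First I would fix representatives $\{t_i\}_{i\ge 0}$ of the right cosets of $H$ in $G$, with $t_0=e$, so that $G=\bigsqcup_i Ht_i$. Left translation by $h\in H$ preserves each coset and identifies $Ht_i$ $H$-equivariantly with $H$ (via $ht_i\leftrightarrow h$), so the restriction $W:=\ell^\Phi(G)|_H$ is the Orlicz direct sum $\bigoplus_i^\Phi \ell^\Phi(Ht_i)$ of copies of the left regular module $V:=\ell^\Phi(H)$; membership in $W$ is governed by the additive modular $\rho_\Phi=\sum_i\rho_\Phi(\cdot|_{Ht_i})$. For a cochain $b$ into $W$ I write $b=(b_i)_i$ for its coset components, each $b_i$ a cochain into the $i$-th copy of $V$. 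The coset $Ht_0=H$ furnishes an isometric $H$-equivariant inclusion $\iota\colon V\hookrightarrow W$ (extension by zero) and an $H$-equivariant norm-one projection $p\colon W\to V$ (restriction) with $p\iota=\id$.

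The retraction $(\iota,p)$ induces maps on (reduced) cohomology with $p_*\iota_*=\id$, so $\iota_*$ is injective for both $H^1$ and $\overline{H}^1$; hence $\overline{H}^1(H,W)=0\Rightarrow\overline{H}^1(H,V)=0$ and $H^1(H,W)=0\Rightarrow H^1(H,V)=0$, which is $(\mathrm{ii})\Rightarrow(\mathrm{i})$ and $(\mathrm{ii}')\Rightarrow(\mathrm{i}')$. For $(\mathrm{i})\Rightarrow(\mathrm{ii})$ I would truncate: given $b\in Z^1(H,W)$ put $b^{(N)}=(b_1,\dots,b_N,0,\dots)$. Since $\Phi\in\Delta_2(0)$, functions supported on finitely many cosets are dense in $\ell^\Phi(G)$, so for each fixed $h$ the truncations $b^{(N)}(h)$ converge to $b(h)$ in the gauge norm, i.e. $b^{(N)}\to b$ pointwise. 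By $(\mathrm{i})$ each $b_i\in Z^1(H,V)=\overline{B}^1(H,V)$, and approximating the finitely many nonzero components simultaneously by coboundaries of finitely supported tuples (which lie in $W$) shows $b^{(N)}\in\overline{B}^1(H,W)$; as $\overline{B}^1$ is closed, $b\in\overline{B}^1(H,W)$.

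The remaining implication $(\mathrm{i}')\Rightarrow(\mathrm{ii}')$ is the main obstacle, because solving each component equation $b_i=\partial v_i$ gives no uniform modular control on the assembled primitive $(v_i)_i$, so one cannot directly conclude $(v_i)_i\in W$. I would circumvent this as follows. If $H$ is finite all four spaces vanish, so assume $H$ infinite. Suppose $(\mathrm{i}')$ holds; via the canonical surjection $H^1\twoheadrightarrow\overline{H}^1$ we also get $\overline{H}^1(H,V)=0$, hence $H^1(H,V)=\overline{H}^1(H,V)$, and by Proposition~\ref{or-cohom} this forces $H$ to be nonamenable. Then Proposition~\ref{amenab-subgr} tells us that the permutation module $W=\ell^\Phi(G)|_H$ has no almost invariant vectors, whence (by the same mechanism underlying Proposition~\ref{or-cohom}, namely that the coboundary map is bounded below) the space $B^1(H,W)$ is closed, so $B^1(H,W)=\overline{B}^1(H,W)$. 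Since $(\mathrm{i}')$ gives $(\mathrm{i})$ and the already-proved reduced implication gives $\overline{H}^1(H,W)=0$, we conclude $H^1(H,W)=Z^1/B^1=Z^1/\overline{B}^1=\overline{H}^1(H,W)=0$, which is $(\mathrm{ii}')$. Thus the passage from reduced to genuine vanishing, impossible by bare component-wise solvability, is made possible by extracting nonamenability of $H$ from $(\mathrm{i}')$ and then using the absence of almost invariant vectors for $W$.
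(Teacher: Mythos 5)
Your proof is correct and follows essentially the same route as the paper: the coset decomposition of $\ell^\Phi(G)|_H$ into copies of $\ell^\Phi(H)$, the zero-extension map $\iota$ for the easy implications, a $\Delta_2$-based truncation argument for (i)$\Rightarrow$(ii), and for (i$'$)$\Rightarrow$(ii$'$) the extraction of nonamenability of $H$ via Proposition~\ref{or-cohom}, followed by Proposition~\ref{amenab-subgr} to rule out almost invariant vectors and close $B^1(H,\ell^\Phi(G)|_H)$. Your explicit use of the retraction $p$ with $p\iota=\id$ is a minor (and welcome) clarification of the injectivity the paper leaves implicit, but the argument is the same.
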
        

\begin{proof}
Make use of the~scheme of proof in~[8], where the $\ell^p$-case is discussed. Since
$\Phi$ is $\Delta_2$-regular, the~Orlicz space $\ell^\Phi(X)$ on a~countable set
consists of~those sequences $(x_n)$ for which $\sum\limits_{n=1}^\infty \Phi(x_n)<\infty$.
Thus, $\ell^\Phi(G)|_H$ can be identified with the $\ell^\Phi$-direct sum of $[G:H]$
copies of~$\ell^\Phi(H)$.

The implications {\rm(ii)}$\Longrightarrow${\rm(i)} and {\rm(ii$'$)}$\Longrightarrow${\rm(i$'$)} 
follow from the inclusions $H^1(H,\ell^\Phi(H)) \to H^1(H,\ell^\Phi(G)|_H)$ and 
$\overline{H}^1(H,\ell^\Phi(H)) \to \overline{H}^1(H,\ell^\Phi(G)|_H)$ induced by the mapping
$\iota: Z^1(H,\ell^\Phi(H)) \to Z^1(H,\ell^\Phi(G)|_H)$, $b \mapsto (b,0,0,\dots)$.

{\rm(i)}$\Longrightarrow${\rm(ii)} There is nothing to prove for $[G:H]<\infty$, and so 
we assume that $[G:H]=\infty$. Given a cocycle $b\in Z^1(H,\ell^\Phi(G)|_H)$, let 
$b_n\in Z^1(H,\ell^\Phi(H)$ be its $n$th ``component'', lying in $\ell^\Phi(H s_n)$. Take 
a~finite subset~$K$ in~$H$ and $\varepsilon>0$. 
It is well known (see \cite[p.~83, Theorem 12]{RaRen91} for details; the proof there 
is carried out  for global $\Delta_2$-regularity but is easily modified to 
the ``one-sided'' cases) that if an $N$-function $\Phi$ is $\Delta_2$-regular 
then
$$
\lim\limits_{\rho_\Phi(v)\to 0}\|v\|_{\ell^{(\Phi)}(G)}=0.
$$
This means that for every $\varepsilon>0$ there exists $\delta>0$ such that
$$
\rho_\Phi(f)<\delta \ \text{implies}\ \|f\|_{\ell^{(\Phi)}(G)}<\varepsilon.
$$
Choose $\delta_0$ such that if $\rho_\Phi(v)<\delta_0$ then
$\|v\|_{\ell^{(\Phi)}(G)}<\frac{\varepsilon}{2}$. Fix $\overline{N}>0$ such that
$\sum\limits_{n=\overline{N}+1}^\infty \Phi(b_n(h)) < \delta_0$ for all $h\in K$.
This means that, for the sequence 
$$
b_{> \overline{N}}(h)=(0,\dots,0,\Phi(b_{\overline{N}+1}(h)), \Phi(b_{\overline{N}+2})(h),\dots), \quad h\in K,
$$
we have $\rho_\Phi(b_{> \overline{N}}(h))< \delta_0$. Thus, by the choice of~$\delta_0$, 
$\|b_{>\overline{N}}(h)\|_{(\Phi)}<\frac{\varepsilon}{2}$. Now, for each $i=1,2,\dots,\overline{N}$,
find a function $v_i\in \ell^\Phi(H)$ with  
$\|b_i(h)-(\lambda_H(h)v_i-v_i)\|_{(\Phi)}<\varepsilon/2^{\overline{N}}$ for all $h\in K$.
Take $v=(v_1,\dots, v_{\overline{N}},0,0,\dots)\in \ell^\Phi(G)$. Then
$$
\|b(h) - (\lambda_G(h)v - v)\|_{(\Phi)} \le \sum_{i=1}^{\overline{N}}
\|b_i(h)-(\lambda_H(h)v_i-v_i)\|_{(\Phi)}
+\|b_{>\overline{N}}(h)\|_{(\Phi)} < \varepsilon
$$  
for each $h\in K$. Thus, $b\in \overline{B}^1(H,\ell^\Phi(G)|_H)$ and so 
$\overline{H}^1(H,\ell^\Phi(G)|_H)=0$. 

{\rm(i$'$)}$\Longrightarrow${\rm(ii$'$)} 
(a) If $H$ is finite then $H^1(H,\ell^\Phi(H))=H^1(H,\ell^\Phi(G)|_H)=0$.

(b) Suppose that $H$ is infinite. Then Proposition~\ref{or-cohom} implies that
$H$ is not amenable. Then from Proposition~\ref{amenab-subgr} it follows that 
$\ell^\Phi(G)|_H$ does not have almost invariant vectors. Involving 
Proposition~\ref{or-cohom}
and the implication {\rm(i)}$\Longrightarrow${\rm(ii)}, we obtain
$$
\overline{H}^1(H,\ell^\Phi(G)|_H)=H^1(H,\ell^\Phi(G)|_H)=H^1(H,\ell^\Phi(H))=0.
$$
\end{proof}

The following theorem is known for~$\ell^p$-cohomology 
(see \cite[Theorem~1]{BMV}): 

\begin{theorem}\label{inf-norm}
Suppose that $\Phi$ is an $N$-function of class~$\Delta_2(0)$. Let $N \le H \le G$ 
be a~chain of countable discrete groups such that $N$ is an~infinite 
normal subgroup in~$G$ and $H$ is nonamenable.
If $\overline{H}^1(H,\ell^\Phi(H))=0$ then $H^1(G,\ell^\Phi(G))=~0$. 
\end{theorem}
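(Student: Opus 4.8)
The plan is to run the low-degree exact sequence of Proposition~\ref{rest} with respect to the infinite normal subgroup~$N$ \emph{twice} --- once for~$G$ and once for the intermediate group~$H$ --- and thereby reduce the vanishing of $H^1(G,\ell^\Phi(G))$ to the vanishing over~$H$, which the hypotheses already deliver. Throughout, the single coefficient module is $V:=\ell^\Phi(G)$, restricted to $H$ and to $N$ as needed.

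First I would dispose of the $H$-side. Since $H$ is nonamenable, Proposition~\ref{or-cohom} gives $H^1(H,\ell^\Phi(H))=\overline{H}^1(H,\ell^\Phi(H))$, and the right-hand side is~$0$ by hypothesis; hence $H^1(H,\ell^\Phi(H))=0$. Then the implication (i$'$)$\Rightarrow$(ii$'$) of Lemma~\ref{cohom-rest} upgrades this to $H^1(H,\ell^\Phi(G)|_H)=0$.

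Next I would record the key structural fact that $V^N=0$. Indeed, a vector $f\in\ell^\Phi(G)$ fixed by $\lambda_G(n)$ for every $n\in N$ satisfies $f(n^{-1}x)=f(x)$, so $f$ is constant on each coset $Nx$. Because $\Phi\in\Delta_2(0)$, the space $\ell^\Phi(G)$ consists precisely of the summable sequences (those $f$ with $\sum_x\Phi(f(x))<\infty$); as $N$ is infinite, a nonzero constant along an infinite coset would force $\sum_x\Phi(f(x))=\infty$. Hence $f=0$, so $V^N=0$, and since $N\le H$ the same computation yields $(V|_H)^N=0$. With this in hand I would apply Proposition~\ref{rest}(2) to the pair $(G,N)$, obtaining an isomorphism $\mathrm{Rest}_G^N\colon H^1(G,\ell^\Phi(G))\xrightarrow{\sim}H^1(N,\ell^\Phi(G)|_N)^{G/N}$, and to the pair $(H,N)$ --- note $N$ is normal in~$H$ because $N\trianglelefteq G$ and $N\le H\le G$ --- obtaining $\mathrm{Rest}_H^N\colon H^1(H,\ell^\Phi(G)|_H)\xrightarrow{\sim}H^1(N,\ell^\Phi(G)|_N)^{H/N}$.

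To conclude, the previous step gives $H^1(H,\ell^\Phi(G)|_H)=0$, whence $H^1(N,\ell^\Phi(G)|_N)^{H/N}=0$. Since $H/N$ is a subgroup of $G/N$, being fixed by the larger group is a stronger condition, so the invariant subspaces are nested: $H^1(N,\ell^\Phi(G)|_N)^{G/N}\subseteq H^1(N,\ell^\Phi(G)|_N)^{H/N}=0$. Transporting this back through the first isomorphism yields $H^1(G,\ell^\Phi(G))=0$. The computations of invariant vectors are routine; the step demanding care is the simultaneous use of the Hochschild--Serre-type sequence for $G$ and for $H$, together with the verification that restriction is functorial along $N\le H\le G$ (so that the $G/N$-fixed subspace genuinely sits inside the $H/N$-fixed one). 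I expect the main obstacle to be bookkeeping rather than anything deep: one must keep all three cohomologies anchored to the same module $\ell^\Phi(G)$, and must note that $\Phi\in\Delta_2(0)$ is exactly what simultaneously licenses the summable-sequence description of $\ell^\Phi$ used for $V^N=0$ and the hypotheses of Propositions~\ref{or-cohom}--\ref{amenab-subgr} and Lemma~\ref{cohom-rest}.
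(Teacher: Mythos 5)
Your proposal is correct and follows essentially the same route as the paper: nonamenability of $H$ plus Proposition~\ref{or-cohom} gives $H^1(H,\ell^\Phi(H))=0$, Lemma~\ref{cohom-rest} upgrades this to $H^1(H,\ell^\Phi(G)|_H)=0$, and the vanishing $\ell^\Phi(G)^N=0$ (from $N$ being infinite) activates Proposition~\ref{rest}(2). The only cosmetic difference is the endgame: the paper factors the injective map $\mathrm{Rest}_G^N$ through the zero space $H^1(H,\ell^\Phi(G)|_H)$, whereas you invoke the isomorphism of Proposition~\ref{rest}(2) twice --- for $(G,N)$ and for $(H,N)$ --- and then use the nesting $H^1(N,\ell^\Phi(G)|_N)^{G/N}\subseteq H^1(N,\ell^\Phi(G)|_N)^{H/N}$, which is equivalent bookkeeping.
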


\begin{proof}
Since $H$ is nonamenable, by Proposition~\ref{or-cohom} we have 
$H^1(H,\ell^\Phi(H))=0$. Since $N$ is infinite, $\ell^\Phi(G)^N=0$. 
Thus, by Proposition~\ref{rest}(2), the restriction map 
$\mathrm{Rest}_G^N:H^1(G,\ell^\Phi(G))\to H^1(N,\ell^\Phi(G)|_N)$
considered as acting into $H^1(N,\ell^\Phi(G)|_N)$ is injective. In the sequence
$$
H^1(G,\ell^\Phi(G)) \overset{\mathrm{Rest}_G^H}{\longrightarrow} 
H^1(H,\ell^\Phi(G)|_H) \overset{\mathrm{Rest}_H^N}{\longrightarrow} 
H^1(N,\ell^\Phi(G)|_N) ,
$$  
the composition $\mathrm{Rest}_G^N = \mathrm{Rest}_G^H \circ \mathrm{Rest}_H^N$
is injective but $\mathrm{Rest}_G^H = 0$ because $H^1(H,\ell^\Phi(G)|_H)=0$ 
by Lemma~\ref{cohom-rest}. Hence, $H^1(G,\ell^\Phi(G))=0$. 
\end{proof}

\begin{corollary}
Under the conditions of Theorem~\ref{inf-norm}, $\overline{H}^1(G,\ell^\Phi(G))=~0$. 
\end{corollary}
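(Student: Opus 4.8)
The plan is to observe that the reduced cohomology $\overline{H}^1(G,\ell^\Phi(G))$ is always a quotient of the ordinary cohomology $H^1(G,\ell^\Phi(G))$, so that the conclusion follows immediately from Theorem~\ref{inf-norm} with no additional work. Concretely, by the definitions recalled in Section~\ref{1-cohom} we have $H^1(G,V)=Z^1(G,V)/B^1(G,V)$ and $\overline{H}^1(G,V)=Z^1(G,V)/\overline{B}^1(G,V)$, where $\overline{B}^1(G,V)$ denotes the closure of $B^1(G,V)$ in $Z^1(G,V)$ with respect to the topology of uniform convergence on compact subsets of $G$. Since $B^1(G,V)\subseteq \overline{B}^1(G,V)\subseteq Z^1(G,V)$, the identity on $Z^1(G,V)$ induces a natural surjection $H^1(G,V)\twoheadrightarrow \overline{H}^1(G,V)$.

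Applying this with $V=\ell^\Phi(G)$, I would argue as follows. Under the hypotheses of Theorem~\ref{inf-norm} that theorem gives $H^1(G,\ell^\Phi(G))=0$, which means precisely that every $1$-cocycle is a coboundary, i.e. $Z^1(G,\ell^\Phi(G))=B^1(G,\ell^\Phi(G))$. The chain of inclusions above then forces $\overline{B}^1(G,\ell^\Phi(G))=Z^1(G,\ell^\Phi(G))$ as well, and therefore $\overline{H}^1(G,\ell^\Phi(G))=Z^1(G,\ell^\Phi(G))/\overline{B}^1(G,\ell^\Phi(G))=0$.

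There is no genuine obstacle in this corollary: it is a purely formal consequence of the fact that passing from $H^1$ to $\overline{H}^1$ only enlarges the subspace of coboundaries being quotiented out, so the vanishing of the unreduced cohomology automatically entails the vanishing of the reduced one. The only point worth recording is the direction of the inequality $B^1\subseteq\overline{B}^1$, which holds by definition of the closure and requires nothing about $\Phi$, $N$, $H$, or amenability beyond what Theorem~\ref{inf-norm} already supplies.
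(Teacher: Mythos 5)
Your proof is correct and is precisely the argument the paper leaves implicit: since $B^1\subseteq\overline{B}^1\subseteq Z^1$, vanishing of $H^1(G,\ell^\Phi(G))$ forces $\overline{B}^1=Z^1$, hence $\overline{H}^1(G,\ell^\Phi(G))=0$. The paper states the corollary without proof because it is exactly this formal quotient observation, so your write-up matches the intended reasoning.
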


%%%%%%%%%%%%%%%%%%%%%%%%%%%%%%%%%%%%%%%%%%%%%%%%%%%%%%%%%%%%%%%%%%%%%%%%%%%%%%%%%%%%%%%%%%%%%%%%

\section{$\Phi$-Harmonic Functions}\label{phi-harm}

Let $G$ be a finitely generated group with finite generating set $S$, 
and suppose that $G$ acts on a countable set $X$. 

If $A$ is an abelian group then denote by $A^X$ the abelian group of 
all functions $f:X\to A$. Denote by $\lambda_{X}:G\to A^X$
the {\it permutation representation} of $G$ on $A^X$:
$$
\lambda_X(g) f(x) = f(g^{-1} x), \quad f\in A^X, \,\, g\in G.
$$
This turns $A^X$ into a $G$-module. If $X=G$ this representation is called
the {\it left regular representation of} of~$G$ in~$A^G$.

The following assertion is well known (see, for example, \cite[Lemma~2.1]{MV07}):

\begin{proposition}\label{cohom-func}
Suppose that a (discrete) group~$G$ acts freely on a set~$X$. Then
$H^1(G,A^X)=0$.
\end{proposition}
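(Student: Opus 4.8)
The plan is to exploit freeness of the action in order to solve the coboundary equation one orbit at a time. Fix a cocycle $b\in Z^1(G,A^X)$; unwinding the definitions of $\lambda_X$ and of $Z^1$, the cocycle identity reads $b(gh)(x)=b(g)(x)+b(h)(g^{-1}x)$ for all $g,h\in G$ and $x\in X$, and the goal is to produce a single $v\in A^X$ with $b(g)(x)=v(g^{-1}x)-v(x)$, which would exhibit $b$ as an element of $B^1(G,A^X)$.

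First I would choose a set $R\subset X$ of representatives for the $G$-orbits. Because the action is free, for each $x_0\in R$ the map $G\to G\cdot x_0$, $g\mapsto gx_0$, is a bijection onto the orbit; thus every point of $X$ is uniquely of the form $gx_0$ with $x_0\in R$ and $g\in G$. This lets me define $v$ separately on each orbit: setting $v(x_0):=0$, the desired relation evaluated at $x=x_0$ forces $v(g^{-1}x_0)=b(g)(x_0)$, equivalently $v(gx_0):=b(g^{-1})(x_0)$. Freeness is exactly what guarantees that this prescription is unambiguous, and since $A^X$ consists of \emph{all} functions $X\to A$ with no summability or topological constraint, the resulting $v$ automatically lies in $A^X$.

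The substantive step is to check that this $v$, built only from the values of $b$ at the base points, actually satisfies the coboundary relation at \emph{every} point of the orbit. Taking an arbitrary $x=hx_0$ and substituting the definition of $v$, the right-hand side $v(g^{-1}x)-v(x)$ becomes $b(h^{-1}g)(x_0)-b(h^{-1})(x_0)$; applying the cocycle identity in the form $b(h^{-1}g)(x_0)=b(h^{-1})(x_0)+b(g)(hx_0)$ collapses this to $b(g)(hx_0)=b(g)(x)$, as required. Since $g^{-1}x$ always lies in the same orbit as $x$, the equation never couples distinct orbits, so assembling the orbitwise definitions yields a global $v\in A^X$ with $b=\lambda_X(\cdot)v-v$. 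Hence $Z^1(G,A^X)=B^1(G,A^X)$ and $H^1(G,A^X)=0$.

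I do not anticipate a serious obstacle. The only points demanding care are the well-definedness of $v$ (which is precisely where freeness enters) and the verification of the coboundary relation away from the base points, and both reduce to a one-line manipulation of the cocycle identity. It is worth emphasizing that the absence of any topological or $\ell^\Phi$-type condition on $A^X$ is what makes this naive construction go through, in contrast to the analytic subtleties encountered for $\ell^\Phi(G)$ elsewhere in the paper.
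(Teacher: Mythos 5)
Your proof is correct. There is, however, nothing in the paper to compare it against: the paper states Proposition~\ref{cohom-func} as well known and simply cites \cite[Lemma~2.1]{MV07}, giving no proof of its own. Your orbit-by-orbit construction is the standard direct argument: freeness is exactly what makes the prescription $v(x_0)=0$, $v(gx_0):=b(g^{-1})(x_0)$ unambiguous (every point of $X$ has a unique expression $gx_0$ with $x_0$ in the chosen transversal), and the cocycle identity in the form $b(h^{-1}g)(x_0)=b(h^{-1})(x_0)+b(g)(hx_0)$ is precisely what propagates the coboundary relation $b(g)(x)=v(g^{-1}x)-v(x)$ from the base point to the whole orbit; since $A^X$ carries no summability or topological constraint (and continuity of cocycles is vacuous for discrete $G$), the assembled $v$ automatically lies in the module. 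Conceptually, your computation is a hands-on instance of the usual packaging of this fact: freeness identifies $X$, as a $G$-set, with a disjoint union of copies of $G$, so that $A^X\cong\prod_{\text{orbits}}A^G$ is a product of coinduced modules, and the first cohomology of a coinduced module vanishes (Shapiro's lemma); your explicit formula for $v$ is just that vanishing made concrete, and as a self-contained verification it is complete.
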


Below we write $\mathcal{F}(X)$ instead of $\mathbb{R}^X$.

Introduce the space of {\it $\Phi$-Dirichlet finite functions} 
\begin{equation} 
\begin{split}
D^{\Phi}(X) &= \{f \in \mathcal{F}(X) \mid\| \lambda_{X}(g)f - f\|_{\ell^{(\Phi)}(X)} < \infty \mbox{ for all }g \in G\} \\
&=\{f \in \mathcal{F}(X) \mid\| \lambda_{X}(s)f - f\|_{\ell^{(\Phi)}(X)} < \infty \mbox{ for all }s \in S\}. 
\end{split}
\notag
\end{equation}
Let $D^{\Phi}(X)^{G}$ be the space of functions in~$D^\Phi(X)$ constant on the $G$-orbits of~$X$ 
and let $\ell^{\Phi}(X)^G$ be the space of $\ell^{\Phi}$-functions constant on $G$-orbits. Endow
$\mathscr{D}^{\Phi}(X) = D^{\Phi}(X)/D^{\Phi}(X)^{G}$ with the norm
$$
\|f\|_{ \mathscr{D}^{\Phi}(X)} = \sum_{s \in S}\|\lambda_{X}(s)f - f\|_{\ell^{(\Phi)}(X)}.
$$
Clearly, $D^{\Phi}(X)^{G}$ is the kernel of the map
$$
\|\cdot\|_{ \mathscr{D}^{\Phi}(X)}: D^{\Phi}(X) \to \mathbb{R}.
$$ 
Define the linear mapping $\alpha : D^{\Phi}(X) \to Z^{1}(G,~\ell^{\Phi}(X))$ 
by setting
$$
\alpha(f)(g) = \lambda_{X}(g)f - f.
$$
The mapping induced by~$\alpha$ on $\mathscr{D}^{\Phi}(X)$ 
is an injection since $D^{\Phi}(X)^{G}$ is the kernel of $\alpha$.

Put $\ell_G^{\Phi}(X) = \ell^{\Phi}(X)/\ell^{\Phi}(X)^{G}$.
%-------------------------------------------------------------------------------------------[Theorem 3.1]---------------------------------------------------------------------------------------------------------------------------------------------------- 

\begin{theorem}\label{top_isomor}
Suppose that a finitely generated group $G$ acts freely on a countable set~$X$. 
Then $\alpha : \mathscr{D}^{\Phi}(X) \to Z^{1}(G, \ell^{\Phi}(X))$  is a topological isomorphism, 
which implies the following:
\begin{enumerate}
\item $H^{1}(G, \ell^{\Phi}(X)) \cong \mathscr{D}^{\Phi}(X)/\ell_G^{\Phi}(X)$
\item $\overline{H}^{1}(G, \ell^{\Phi}(X)) \cong \mathscr{D}^{\Phi}(X)/\overline{\ell_G^{\Phi}(X)}$
\end{enumerate}
\end{theorem}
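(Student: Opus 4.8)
The plan is to show that the injection induced by $\alpha$ on $\mathscr{D}^{\Phi}(X)$ is in fact a surjective topological isomorphism onto $Z^{1}(G,\ell^{\Phi}(X))$, and then to read off (1) and (2) by locating the coboundaries and their closure inside $\mathscr{D}^{\Phi}(X)$. Since $D^{\Phi}(X)^{G}$ is already the kernel of $\alpha$, the induced map $\bar\alpha\colon\mathscr{D}^{\Phi}(X)\to Z^{1}(G,\ell^{\Phi}(X))$ is well defined and injective; what remains is surjectivity together with the two-sided topological estimate. The two consequences then follow formally, once $B^{1}$ is matched with a concrete subspace of $\mathscr{D}^{\Phi}(X)$.

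For surjectivity I would use freeness in an essential way. Fix a transversal $T\subset X$ for the $G$-action, so that every $x\in X$ has a unique expression $x=h\cdot t$ with $h\in G$ and $t\in T$. Given $b\in Z^{1}(G,\ell^{\Phi}(X))$, define $f\in\mathcal{F}(X)$ by $f(h\cdot t)=-b(h)(h\cdot t)$; freeness guarantees this is well defined. A direct computation using the cocycle identity $b(gh)=b(g)+\lambda_{X}(g)b(h)$, together with $b(e)=0$ and the resulting relation $b(g)(x)=-b(g^{-1})(g^{-1}x)$, shows that $(\lambda_{X}(g)f-f)(x)=b(g)(x)$ for all $g\in G$ and $x\in X$, that is, $\alpha(f)=b$. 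In particular $\lambda_{X}(s)f-f=b(s)\in\ell^{\Phi}(X)$ for every $s\in S$, so $f\in D^{\Phi}(X)$ and $\bar\alpha$ is onto. I expect this construction --- choosing the transversal and checking $\alpha(f)=b$ --- to be the main obstacle, since it is the only place where the freeness hypothesis enters and where one must manufacture a primitive for an arbitrary cocycle.

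For the topological part, note that $G$ is discrete, so the compact subsets entering the definition of the topology on $Z^{1}$ are exactly the finite ones, and a cocycle is determined by its values on the generating set $S$. Writing $g=s_{1}\cdots s_{k}$ as a word in $S\cup S^{-1}$ and iterating the cocycle identity gives $b(g)=\sum_{i}\lambda_{X}(s_{1}\cdots s_{i-1})b(s_{i})$, whence $\|b(g)\|_{(\Phi)}\le |g|_{S}\sum_{s\in S}\|b(s)\|_{(\Phi)}$, because $\lambda_{X}$ acts by isometries and $\|b(s^{-1})\|_{(\Phi)}=\|b(s)\|_{(\Phi)}$. Thus uniform convergence on finite subsets is equivalent to convergence in the norm $b\mapsto\sum_{s\in S}\|b(s)\|_{(\Phi)}$, and by construction the value of this norm at $\bar\alpha(f)$ equals $\|f\|_{\mathscr{D}^{\Phi}(X)}$; hence $\bar\alpha$ is a topological isomorphism, indeed an isometry for these norms.

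Finally I would identify the coboundaries. Since $\ell^{\Phi}(X)\subset D^{\Phi}(X)$ and $\alpha(v)(g)=\lambda_{X}(g)v-v$ for $v\in\ell^{\Phi}(X)$, we have $B^{1}(G,\ell^{\Phi}(X))=\alpha(\ell^{\Phi}(X))$, and the preimage of $B^{1}$ in $\mathscr{D}^{\Phi}(X)$ is the image of $\ell^{\Phi}(X)$, namely $\ell^{\Phi}(X)/(\ell^{\Phi}(X)\cap D^{\Phi}(X)^{G})=\ell^{\Phi}(X)/\ell^{\Phi}(X)^{G}=\ell_{G}^{\Phi}(X)$. Passing to quotients through the isomorphism $\bar\alpha$ gives $H^{1}(G,\ell^{\Phi}(X))\cong\mathscr{D}^{\Phi}(X)/\ell_{G}^{\Phi}(X)$, which is (1). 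Because $\bar\alpha$ is a homeomorphism it carries $\overline{\ell_{G}^{\Phi}(X)}$ onto $\overline{B}^{1}(G,\ell^{\Phi}(X))$, so the same quotient taken with closures yields $\overline{H}^{1}(G,\ell^{\Phi}(X))\cong\mathscr{D}^{\Phi}(X)/\overline{\ell_{G}^{\Phi}(X)}$, which is (2).
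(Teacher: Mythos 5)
Your proof is correct and follows essentially the same route as the paper: injectivity comes from $D^{\Phi}(X)^{G}$ being the kernel of $\alpha$, surjectivity comes from trivializing the cocycle in $\mathcal{F}(X)$ (your transversal construction is precisely the proof of Proposition~\ref{cohom-func}, which the paper simply cites to get $H^{1}(G,\mathcal{F}(X))=0$), and matching $B^{1}(G,\ell^{\Phi}(X))$ with $\alpha(\ell_G^{\Phi}(X))$ yields (1) and (2). The only differences are presentational: you inline the proof of the key lemma instead of citing it, and you spell out the word-length estimate $\|b(g)\|_{(\Phi)}\le |g|_{S}\sum_{s\in S}\|b(s)\|_{(\Phi)}$ behind the paper's ``it is easy to see that $\alpha^{-1}$ is continuous.''
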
 

\begin{proof}
From the previous considerations, the map $\alpha$ is continuous and injective. Prove that 
it is surjective. By Proposition~\ref{cohom-func}, $H^{1}(G, \mathcal{F}(X)) = 0$. Hence, 
for every $b \in Z^{1}(G, \ell^{\Phi}(X))$ there exists a function $f \in \mathcal{F}(X)$ such
that $b(g) = \lambda_{X}(g)f - f$ for all $g \in G$. Clearly, $f \in D^{\Phi}(X)$. 
Therefore, $\alpha (f) = b$. It is easy to see that $\alpha ^{-1}$ is continuous.
The definition of~$\alpha$ yields $\alpha(\ell_G^{\Phi}(X)) = B^{1}(G, \ell^{\Phi}(X))$. 
The theorem is proved.
\end{proof}

%------------------------------------------------------------------------------------------------[DEF]------------------------------------------------------------------------------------------------------------------------ 
From now on and until the end of the section, we will assume that $\Phi$ 
is {\it continuously differentiable}.

\begin{definition}
Suppose that $G$ is a finitely generated group, $S$ is its finite set of generators, and
$G$ acts on a countable set $X$. Define the 
Define the {\it $\Phi$-Laplacian} $\Delta_{\Phi} : \mathcal{F}(X) \to \mathcal{F}(X)$
as follows:  
$$
(\Delta_{\Phi}f)(x) := \sum_{s \in S}\Phi'(f(s^{-1}x) - f(x)) 
\text{ for $f \in \mathcal{F}(X)$ and $x \in X$}.
$$
A function $f \in D^\Phi(X)$ is called {\it $\Phi$-harmonic} if $(\Delta_{\Phi}f)(x) = 0$ 
for all $x \in X$. Denote the set of $\Phi$-harmonic functions on~$X$ by $HD^{\Phi}(X)$.
\end{definition}
                                                                        
Introduce a pairing 
$\langle \Delta_{\Phi}*, \, *\rangle : D^{\Phi}(X) \times D^{\Phi}(X) \to \mathbb{R}$
by the formula
$$
\langle \Delta_{\Phi}h, \, f\rangle := \sum_{x \in X}\sum_{s \in S}\Phi'(h(s^{-1}x) - h(x))(f(s^{-1}x) - f(x))
$$
This sum is finite by Proposition~\ref{orl_class}. 

Note that the $\Phi$-Laplacian and the form $\langle \Delta_\Phi\cdot, \cdot\rangle$ are 
well defined on the elements of $\mathscr{D}^\Phi(X)$. Preserve their notation for this space.

Recall (see~\cite{Mus83}) that if $V$ is a real vector space then the functional  
$\rho: V\to [0,\infty]$ is called a {\em modular} on~$V$ if the following hold for all $x,y\in V$:
\begin{enumerate}
\item $\rho(0)=0$;
\item $\rho(-x)=\rho(x)$;
\item $\rho(\alpha x+\beta y)\leq \rho(x) + \rho(y)$ for $\alpha,\beta\geqslant 0$,
$\alpha+\beta=1$;
\item $\rho(x)=0$ implies $x=0$.
\end{enumerate}
The space $\mathscr{D}^{\Phi}(X)$ is endowed with the modular $\rho: \mathscr{D}^{\Phi}(X) \to \mathbb{R}^{+}$,
$$
\rho(f) =\sum_{s \in S} \sum_{x \in X}\Phi\bigl(f(s^{-1}x) - f(x)\bigr).
$$
The \emph{G\^ateaux differential} $\rho'_f$ of the mapping~$\rho$ 
at a point $f \in \mathscr{D}^{\Phi}(X)$ is defined as
$$
\rho_{f}'(g) =\lim_{t \to 0^{+}} \frac{\rho(f+tg) - \rho(f)}{t}.
$$
It is easy to check that $\rho_{f}'(g) = \langle \Delta_{\Phi}f, \, g\rangle$. 

%--------------------------------------------------------------------------------------------[claim]--------------------------------------------------------------------------------------------------------------------------------------
\begin{proposition}\label{dist}
Assume that $\Phi$ is a continuously differentiable strictly convex $N$-function. Let $f_1, \, f_2 \in  D^{\Phi}(X)$. Then $f_1 - f_2 \in D^{\Phi}(X)^{G}$ if and only if
$$
\langle \Delta_{\Phi}f_1, \, f_1 - f_2\rangle = \langle \Delta_{\Phi}f_2, \, f_1 - f_2\rangle
$$
\end{proposition}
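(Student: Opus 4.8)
The plan is to exploit the strict convexity of $\Phi$ together with the fact that $\langle\Delta_\Phi f,\,g\rangle$ is the G\^ateaux differential of the modular $\rho$. Recall that a differentiable convex functional $\rho$ on a vector space satisfies the monotonicity inequality
$$
\langle \rho'_{f_1} - \rho'_{f_2},\, f_1 - f_2\rangle \ge 0,
$$
with equality precisely when the difference $f_1 - f_2$ lies in the ``flat directions'' of $\rho$. Since $\rho'_f(g)=\langle\Delta_\Phi f,\,g\rangle$, the displayed equation in the statement is exactly the assertion that
$$
\langle \Delta_\Phi f_1 - \Delta_\Phi f_2,\; f_1 - f_2\rangle = 0.
$$
So the proposition amounts to characterizing equality in the monotonicity inequality, and I expect strict convexity to force this equality to hold only on the kernel $D^\Phi(X)^G$ of the modular's associated seminorm.

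The reverse implication is the easy direction, so I would dispose of it first. If $f_1-f_2\in D^\Phi(X)^G$ then $f_1-f_2$ is constant on $G$-orbits, whence $(f_1-f_2)(s^{-1}x)-(f_1-f_2)(x)=0$ for every generator $s$ and every $x$. Writing out both pairings term by term over $s\in S$ and $x\in X$, each summand contains the factor $\bigl((f_1-f_2)(s^{-1}x)-(f_1-f_2)(x)\bigr)$, which vanishes; hence $\langle\Delta_\Phi f_1,\,f_1-f_2\rangle=0=\langle\Delta_\Phi f_2,\,f_1-f_2\rangle$ and the two sides agree.

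For the forward implication I would argue pointwise on the ``edge'' differences. Abbreviate, for a fixed $s\in S$ and $x\in X$, the increments $a_i = f_i(s^{-1}x)-f_i(x)$. The equality of the two pairings, after moving everything to one side, becomes
$$
\sum_{x\in X}\sum_{s\in S}\bigl(\Phi'(a_1)-\Phi'(a_2)\bigr)(a_1-a_2)=0.
$$
Because $\Phi'$ is nondecreasing (it is the increasing density $\varphi$ of the $N$-function), each term $\bigl(\Phi'(a_1)-\Phi'(a_2)\bigr)(a_1-a_2)$ is nonnegative; a vanishing sum of nonnegative terms forces every term to vanish. Strict convexity of $\Phi$ makes $\Phi'$ strictly increasing, so $\bigl(\Phi'(a_1)-\Phi'(a_2)\bigr)(a_1-a_2)=0$ implies $a_1=a_2$, i.e. $(f_1-f_2)(s^{-1}x)=(f_1-f_2)(x)$ for all $s\in S$ and all $x\in X$. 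Since $S$ generates $G$ and the $\lambda_X(s)$ account for moving along edges of the orbit graph, I would then conclude by a connectivity argument that $f_1-f_2$ is constant along each $G$-orbit, that is, $f_1-f_2\in D^\Phi(X)^G$.

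The main obstacle I anticipate is justifying the term-by-term vanishing and the final connectivity step rigorously. For the first, one must check that the rearrangement into a single absolutely convergent double sum is legitimate; this is where Proposition~\ref{orl_class} is needed, guaranteeing $\Phi'(|f_i(s^{-1}x)-f_i(x)|)\in\tilde\ell^\Psi(X)$ so that the pairings are finite and the sums may be split and recombined. For the second, passing from ``$(f_1-f_2)(s^{-1}x)=(f_1-f_2)(x)$ for each generator'' to ``constant on $G$-orbits'' uses that every element of $G$ is a word in $S\cup S^{-1}$, so any two points in the same orbit are joined by a finite chain of single-generator steps along which $f_1-f_2$ does not change. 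Handling the inverse generators $s^{-1}$ symmetrically (the relation is visibly symmetric in $x$ and $s^{-1}x$) is the only bookkeeping subtlety here.
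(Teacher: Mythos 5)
Your proof is correct, but the route is genuinely different from the paper's. For the nontrivial direction the paper argues by contraposition on the quotient $\mathscr{D}^{\Phi}(X)$: if $f_1-f_2\notin D^{\Phi}(X)^{G}$, then $f_1\ne f_2$ as elements of $\mathscr{D}^{\Phi}(X)$, and the gradient inequality for strictly convex G\^ateaux-differentiable functionals (\cite[Proposition~5.4, p.~24]{EkTem}), applied twice to the modular $\rho$ with the roles of $f_1$ and $f_2$ exchanged and then combined, yields the strict inequality $\langle\Delta_{\Phi}f_1,\,f_1-f_2\rangle>\langle\Delta_{\Phi}f_2,\,f_1-f_2\rangle$. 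You instead argue termwise: writing $a_i=f_i(s^{-1}x)-f_i(x)$, the difference of the two pairings is $\sum_{x,s}\bigl(\Phi'(a_1)-\Phi'(a_2)\bigr)(a_1-a_2)$, a sum of nonnegative terms by monotonicity of $\Phi'$; vanishing of the sum forces each term to vanish, strict convexity (hence strict monotonicity of $\Phi'$) forces $a_1=a_2$ for every $s\in S$ and $x\in X$, and connectivity of orbits under words in $S\cup S^{-1}$ gives $f_1-f_2\in D^{\Phi}(X)^{G}$. Both proofs draw their strictness from the same source, but yours is more elementary and self-contained: it bypasses the appeal to convex analysis and, more importantly, makes explicit a point the paper leaves implicit, namely that strict convexity of $\Phi$ renders $\rho$ strictly convex \emph{as a functional on the quotient} $\mathscr{D}^{\Phi}(X)$ --- which is exactly the termwise statement you prove. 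What the paper's formulation buys is brevity and the operator-theoretic reading (strict monotonicity of the gradient of a strictly convex modular) that matches its later variational argument in Theorem~\ref{decomp}; what yours buys is an explicit, checkable mechanism, including the absolute-convergence justification (via Proposition~\ref{orl_class} and the Orlicz--H\"older pairing) needed to regroup the two convergent sums into one and to conclude termwise. One shared caveat: finiteness of the pairings, which both arguments require, tacitly uses $\Phi\in\Delta_2(0)$, an assumption absent from the proposition's statement but in force in the surrounding section.
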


\begin{proof}
Let $f_1 - f_2 \in D^{\Phi}(X)^{G}$. It is not hard to see that $\langle \Delta_{\Phi}f, \, *\rangle$ 
maps $D^{\Phi}(X)^{G}$ to $\{0\}$ for all $f \in D^{\Phi}(X)$, and so  
$\langle \Delta_{\Phi}f_1, \, f_1 - f_2\rangle = 0 = \langle \Delta_{\Phi}f_2, \, f_1 - f_2\rangle$.
Conversely, suppose that $f_1- f_2 \notin D^{\Phi}(X)^{G}$. Then $f_1$ and $f_2$ define different elements in  
$D^{\Phi}(X)/D^{\Phi}(X)^{G}$. Involving \cite[Proposition~5.4, p.~24]{EkTem}, we conclude that
$$
\rho(f_1) > \rho(f_2) + \rho_{f_2}'(f_1 - f_2) = \rho(f_2) + \langle \Delta_{\Phi}f_2, \, f_1 - f_2\rangle.
$$
Repeating the same argument, we have
$$
\rho(f_2) > \rho(f_1) + \rho_{f_1}'(f_2 - f_1) = \rho(f_1) - \langle \Delta_{\Phi}f_1, \, f_1 - f_2\rangle
$$
Thus, $\langle \Delta_{\Phi}f_1, \, f_1 - f_2\rangle > \langle \Delta_{\Phi}f_2, \, f_1 - f_2\rangle$.
\end{proof}
%--------------------------------------------------------------------------------------------------------------------------------------------------------------------------------------------------------------------------------------------------------------------------------

For every $x \in X$, define a function $\delta_{x}: X \to \mathbb{R}$ by
$$
\delta_{x}(t) = 
\begin{cases}
1 &\text{if $t = x$}\\
0 &\text{if $t \ne x$}
\end{cases}
$$ 

%-----------------------------------------------------------------------------------------------------------------[lemma3.1]----------------------------------------------- 
\begin{lemma}\label{harm_cond}
The following are equivalent for $h \in D^{\Phi}(X)$:
\begin{enumerate}
\item $h \in HD^{\Phi}(X)$;
\item $\langle \Delta_{\Phi}h,\, \delta_x\rangle = 0$ for all $ x \in X$;
\item $\langle \Delta_{\Phi}h,\,f\rangle = 0$ for all $f \in (\overline{\ell_G^{\Phi}(X)})_{\mathscr{D}^\Phi(X)}$.
\end{enumerate}
\end{lemma}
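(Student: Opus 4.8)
The plan is to prove $(1)\Leftrightarrow(2)$ by a direct computation, the inclusion $(3)\Rightarrow(2)$ by observing that each $\delta_x$ already lies in $(\overline{\ell_G^{\Phi}(X)})_{\mathscr{D}^{\Phi}(X)}$, and the converse $(2)\Rightarrow(3)$ by a density-plus-continuity argument. For $(1)\Leftrightarrow(2)$ I would expand the pairing against $\delta_x$ and split the two occurrences of $\delta_x$: only the terms with $y=x$ (from $\delta_x(y)$) and $y=sx$ (from $\delta_x(s^{-1}y)$) survive, giving
$$
\langle \Delta_{\Phi}h,\delta_x\rangle=\sum_{s\in S}\Phi'\bigl(h(x)-h(sx)\bigr)-\sum_{s\in S}\Phi'\bigl(h(s^{-1}x)-h(x)\bigr).
$$
Since $\Phi$ is even, $\Phi'$ is odd; using this together with the symmetry $S=S^{-1}$ of the generating set and reindexing $s\mapsto s^{-1}$, the first sum equals $-(\Delta_{\Phi}h)(x)$, so that $\langle\Delta_{\Phi}h,\delta_x\rangle=-2(\Delta_{\Phi}h)(x)$. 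Hence $\langle\Delta_{\Phi}h,\delta_x\rangle=0$ for every $x$ exactly when $(\Delta_{\Phi}h)(x)=0$ for every $x$, which is $(1)\Leftrightarrow(2)$.

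Next, $(3)\Rightarrow(2)$ is immediate: each $\delta_x$ is finitely supported, hence lies in $\ell^{\Phi}(X)\subseteq D^{\Phi}(X)$, and its class belongs to $\ell_G^{\Phi}(X)\subseteq(\overline{\ell_G^{\Phi}(X)})_{\mathscr{D}^{\Phi}(X)}$, so applying $(3)$ with $f=\delta_x$ yields $(2)$. For $(2)\Rightarrow(3)$ I would argue that the linear span of the classes $\{\delta_x\}_{x\in X}$ is dense in $(\overline{\ell_G^{\Phi}(X)})_{\mathscr{D}^{\Phi}(X)}$ and that the functional $\langle\Delta_{\Phi}h,\cdot\rangle$ is continuous on $\mathscr{D}^{\Phi}(X)$; then $(2)$, i.e.\ vanishing on each $\delta_x$, forces vanishing on the whole closure by linearity and continuity. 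Density follows because, since $\Phi\in\Delta_2(0)$, the finitely supported functions are dense in $\ell^{\Phi}(X)$ in the gauge norm (this uses $\lim_{\rho_\Phi(v)\to 0}\|v\|_{(\Phi)}=0$, already invoked in the proof of Lemma~\ref{cohom-rest}, applied to the tails of $f$), combined with the continuity of the quotient map $\ell^{\Phi}(X)\to\mathscr{D}^{\Phi}(X)$, whose norm is at most $2|S|$ because each $\lambda_X(s)$ is an isometry.

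The main obstacle is precisely the continuity of $\langle\Delta_{\Phi}h,\cdot\rangle$ on $\mathscr{D}^{\Phi}(X)$. To handle it I would set $u_s(x)=\Phi'\bigl(h(s^{-1}x)-h(x)\bigr)$ and $v_s(x)=f(s^{-1}x)-f(x)$, so that $\langle\Delta_{\Phi}h,f\rangle=\sum_{s\in S}\sum_{x\in X}u_s(x)v_s(x)$. Because $h\in D^{\Phi}(X)$, each difference $\lambda_X(s)h-h$ lies in $\ell^{\Phi}(X)$, and Proposition~\ref{orl_class} guarantees $u_s\in\tilde\ell^{\Psi}(X)\subseteq\ell^{\Psi}(X)$, whence $\|u_s\|_{(\Psi)}<\infty$. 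The Hölder inequality for complementary Orlicz spaces then gives $\bigl|\sum_{x}u_s(x)v_s(x)\bigr|\le 2\|u_s\|_{(\Psi)}\|v_s\|_{(\Phi)}$ with $\|v_s\|_{(\Phi)}=\|\lambda_X(s)f-f\|_{(\Phi)}$, and summing over the finite set $S$ yields
$$
|\langle\Delta_{\Phi}h,f\rangle|\le\Bigl(2\max_{s\in S}\|u_s\|_{(\Psi)}\Bigr)\,\|f\|_{\mathscr{D}^{\Phi}(X)}.
$$
This is the required bound, establishing continuity and hence $(2)\Rightarrow(3)$, which closes the chain of equivalences.
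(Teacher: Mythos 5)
Your proposal is correct and follows essentially the same route as the paper: the identity $\langle\Delta_{\Phi}h,\delta_x\rangle=-2(\Delta_{\Phi}h)(x)$ via oddness of $\Phi'$ and symmetry of $S$, the trivial implication from (3) to (2), and for the remaining implication the density of the span of the $\delta_x$ in $(\overline{\ell_G^{\Phi}(X)})_{\mathscr{D}^{\Phi}(X)}$ combined with a H\"older-type bound resting on Proposition~\ref{orl_class}. The only cosmetic difference is that you package the estimate as continuity of the linear functional $\langle\Delta_{\Phi}h,\cdot\rangle$ on $\mathscr{D}^{\Phi}(X)$ (and you justify the density claim, which the paper merely asserts), whereas the paper writes $\langle\Delta_{\Phi}h,f\rangle=\langle\Delta_{\Phi}h,f-f_n\rangle$ and bounds the right-hand side directly using the Orlicz-norm definition with $\Phi'(\lambda_X(s)h-h)/m\in\tilde\ell^{\Psi}_1(X)$; these are the same argument.
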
 

\begin{proof}
(1)$\Leftrightarrow$(2):
We have
\begin{align*} 
\langle \Delta_{\Phi}h,\, \delta_x\rangle &= \sum_{t \in X}\sum_{s \in S}\Phi'\bigl(h(s^{-1}t) - h(t)\bigr)\bigl(\delta_x(s^{-1}t) - \delta_x(t)\bigr) \\
&=\sum_{s \in S}\bigl( \Phi'(h(x) - h(sx)) - \Phi'(h(s^{-1}x) - h(x))\bigr) \\
&=-\sum_{s \in S} \Phi'(h(sx) - h(x)) -\sum_{s \in S} \Phi'(h(s^{-1}x) - h(x))\\
&= -2\sum_{s \in S} \Phi'(h(s^{-1}x) - h(x)) = -2\Delta_{\Phi}h(x).
\end{align*}
This implies that $\Delta_{\Phi}h(x) = 0$ for all $x \in X$ if and only if 
$\langle \Delta_{\Phi}h,\, \delta_x\rangle = 0$ for all $x \in X$.

The implication (3)$\Rightarrow$(2) is trivial.

(1)$\Rightarrow$(3): 
Since $(\overline{\ell_G^{\Phi}(X)})_{\mathscr{D}^{\Phi}(X)}$ coincides with the closure of the linear span $\Delta$ of the set $\{\, \delta_x \mid x\in X\, \}$ â $\mathscr{D}^{\Phi}(X)$, for every 
$f \in \ell_G^{\Phi}(X)$ there exists a sequence $\{f_n\} \subset \Delta$ such that
 $\|f - f_n\|_{\mathscr{D}^{\Phi}(X)} \to 0$ as $n \to \infty$. Suppose now that $h \in {HD}^{\Phi}(X)$ and  
$m = \max\limits_{s \in S}\sum_{x \in X}\Psi\Bigl(\Phi'\bigl(h(s^{-1}x) - h(x)\bigr)\Bigr)$. 
Then $m < \infty$ by Proposition~\ref{orl_class}, and hence 
$\Phi'(\lambda_{X}(s)h - h)/m \in \tilde\ell^{\Psi}_{1}(X)$ for all $s$. Thus, we infer
\begin{multline*}
0 \le |\langle\Delta_{\Phi}h,\,f\rangle| = |\langle\Delta_{\Phi}h,\,f - f_n\rangle| \\
=\left|m\sum_{s \in S}\sum_{x \in X}\frac{\Phi'((\lambda_{X}(s)h - h)(x))}{m}(\lambda_{X}(s)(f - f_n)-(f - f_n))(x)\right|\\
\le |m| \sum_{s \in S}\|(\lambda_{X}(s)(f - f_n)-(f - f_n))\|_{\ell^{\Phi}(X)} \\
\le 2|m| \sum_{s \in S}\|(\lambda_{X}(s)(f - f_n)-(f - f_n))\|_{\ell^{(\Phi)}(X)} 
= 2|m|\|f - f_n\|_{\mathscr{D}^{\Phi}(X)} \to 0\text{ as } n \to \infty.
\end{multline*}
\end{proof}
%--------------------------------------------------------------------------------------------------------------------[theorem]----------------------------------------------------------------------------------------------------------------------------------------------------------------
\begin{theorem}\label{decomp}
Suppose that $\Phi$ is a continuously differentiable strictly convex $N$-function belonging to~$\Delta_2 (0)\cap \nabla_2(0)$. Let $G$ be a finitely generated group acting on  a countable set~$X$.
Then for every $f \in D^{\Phi}(X)$ there exists a decomposition $f = u + h$, where $u \in (\overline{\ell^{\Phi}(X)})_{D^{\Phi}(X)}$ and 
$h \in HD^{\Phi}(X)$. 
It is unique up to an element of $D^{\Phi}(X)^{G}$.
\end{theorem}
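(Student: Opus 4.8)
The plan is to realize the $\Phi$-harmonic part $h$ as the minimizer of the modular $\rho$ over the affine subspace of $\mathscr{D}^{\Phi}(X)$ parallel to $(\overline{\ell_G^{\Phi}(X)})_{\mathscr{D}^{\Phi}(X)}$ and passing through the class of $f$, and then to recover the decomposition by lifting back to $D^{\Phi}(X)$. Writing $[f]$ for the image of $f$ in $\mathscr{D}^{\Phi}(X) = D^{\Phi}(X)/D^{\Phi}(X)^{G}$, I would consider the closed affine set $A = [f] + (\overline{\ell_G^{\Phi}(X)})_{\mathscr{D}^{\Phi}(X)}$ and seek $[h] \in A$ minimizing $\rho$. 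The motivation is Lemma~\ref{harm_cond}: the requirement that $[h]$ be $\Phi$-harmonic is exactly that $\langle \Delta_{\Phi}h, g\rangle = 0$ for all $g \in (\overline{\ell_G^{\Phi}(X)})_{\mathscr{D}^{\Phi}(X)}$, which is precisely the first-order optimality (Euler--Lagrange) condition for this minimization, since $\rho_{h}'(g) = \langle \Delta_{\Phi}h, g\rangle$. Having found $[h]$, I would set $[u] = [f] - [h]$, which lies in $(\overline{\ell_G^{\Phi}(X)})_{\mathscr{D}^{\Phi}(X)}$, and lift to representatives $u, h \in D^{\Phi}(X)$ with $u \in (\overline{\ell^{\Phi}(X)})_{D^{\Phi}(X)}$ and $f = u + h$, using that the quotient map $D^{\Phi}(X)\to\mathscr{D}^{\Phi}(X)$ carries $(\overline{\ell^{\Phi}(X)})_{D^{\Phi}(X)}$ onto $(\overline{\ell_G^{\Phi}(X)})_{\mathscr{D}^{\Phi}(X)}$.

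The existence of the minimizer is the crux. I would embed $\mathscr{D}^{\Phi}(X)$ into the finite product $\prod_{s \in S}\ell^{\Phi}(X)$ via $[f] \mapsto (\lambda_{X}(s)f - f)_{s \in S}$, each factor carrying the gauge norm. Because $\Phi \in \nabla_2(0)$, the complementary $N$-function lies in $\Delta_2(0)$, so each $\ell^{\Phi}(X)$ is reflexive; reflexivity passes to the finite product and to the closed subspace $\mathscr{D}^{\Phi}(X)$. The modular $\rho$ is convex and norm-lower-semicontinuous, hence weakly lower semicontinuous, while $A$ is closed and convex, hence weakly closed. A minimizing sequence $[h_n] \in A$ has bounded modular, and bounded modular forces bounded gauge norm (if $\rho_\Phi(v)\le M$ and $k=\max(M,1)$ then $\rho_\Phi(v/k)\le k^{-1}\rho_\Phi(v)\le 1$, so $\|v\|_{(\Phi)}\le k$); by reflexivity a subsequence converges weakly to some $[h] \in A$, which attains the infimum. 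Strict convexity of $\Phi$ makes $\rho$ strictly convex on $\mathscr{D}^{\Phi}(X)$, so this minimizer is unique in $\mathscr{D}^{\Phi}(X)$.

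With $[h]$ in hand, harmonicity is immediate: for each fixed $g \in (\overline{\ell_G^{\Phi}(X)})_{\mathscr{D}^{\Phi}(X)}$ the scalar function $t \mapsto \rho([h] + tg)$ is convex, differentiable, and minimized at $t = 0$ (both $g$ and $-g$ are admissible directions along the affine subspace), so its derivative $\langle \Delta_{\Phi}h, g\rangle = \rho_{h}'(g)$ vanishes. By Lemma~\ref{harm_cond} this gives $h \in HD^{\Phi}(X)$, and after lifting as above the existence of the decomposition $f = u + h$ is established.

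For uniqueness up to $D^{\Phi}(X)^{G}$, suppose $f = u_1 + h_1 = u_2 + h_2$ with $u_i \in (\overline{\ell^{\Phi}(X)})_{D^{\Phi}(X)}$ and $h_i \in HD^{\Phi}(X)$. Then $h_1 - h_2 = u_2 - u_1$ represents an element of $(\overline{\ell_G^{\Phi}(X)})_{\mathscr{D}^{\Phi}(X)}$, so Lemma~\ref{harm_cond}(3) applied to the harmonic $h_1$ and $h_2$ gives $\langle \Delta_{\Phi}h_1, h_1 - h_2\rangle = 0 = \langle \Delta_{\Phi}h_2, h_1 - h_2\rangle$, whence $h_1 - h_2 \in D^{\Phi}(X)^{G}$ by Proposition~\ref{dist}. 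I expect the main obstacle to be the existence step: verifying that the reflexivity and weak-compactness machinery of Orlicz spaces transfers to the quotient $\mathscr{D}^{\Phi}(X)$ (which is where $\nabla_2(0)$ is really used) and that a minimizing sequence stays bounded in gauge norm. Once this is in place, the variational characterization of harmonicity and the uniqueness argument are direct consequences of the lemmas already proved.
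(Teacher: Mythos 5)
Your proposal is correct and follows essentially the same route as the paper: both run the direct method on the modular $\rho$ (your minimization of $\rho$ over the affine set $[f]+(\overline{\ell_G^{\Phi}(X)})_{\mathscr{D}^{\Phi}(X)}$ is just a reparametrization of the paper's minimization of $g\mapsto\rho(f-g)$ over the closed subspace), using reflexivity of $\ell^{\Phi}(X)$ from $\Delta_2(0)\cap\nabla_2(0)$, weak lower semicontinuity plus weak compactness for existence, the Euler--Lagrange identity $\rho_h'(v)=\langle\Delta_{\Phi}h,v\rangle$ with Lemma~\ref{harm_cond} for harmonicity, and Proposition~\ref{dist} for uniqueness. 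The only differences are cosmetic (minimizing sequence with a weakly convergent subsequence versus the paper's weakly compact sublevel set via Kakutani's theorem, and your extra observation that strict convexity gives uniqueness of the minimizer, which the paper does not need).
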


\begin{proof}
Since $\Phi \in \Delta_2(0) \cap \nabla_2(0)$, the space $\ell^{\Phi}(X)$ is reflexive (see \cite{RaRen02}). The same holds for $\mathscr{D}^\Phi(X)$.

Fix a finite generating set $S$ in $G$.

Let $f\in D^\Phi(X)$; we will denote the corresponding element of $\mathscr{D}^\Phi(X)=D^\Phi(X)/D^\Phi(X)^G$ by the same symbol~$f$.   

Put 
$$
d = \inf_{g \in (\overline{\ell_G^{\Phi}(X)})_{\mathscr{D}^{\Phi}(X)} }\rho(f - g),
$$
$$
B = \{g \in (\overline{\ell^{\Phi}_G(X)})_{\mathscr{D}^{\Phi}(X)}\mid \rho(f - g) \le d + 1\}.
$$
Obviosuly, $B$ is bounded since, by the properties of Orlicz spaces, for all $f \in \mathscr{D}^{\Phi}(X)$ we have $\|f\|_{\mathscr{D}^{\Phi}(X)} \le \rho(f) + k$, where $k$ is the cardinality of~$S$. Now, prove that $B$ is closed. Suppose that
$\|f - f_n\|_{\mathscr{D}^{\Phi}(X)} \to 0$ as $n \to \infty$, 
where $f$,\,$f_n \in (\overline{\ell_G^{\Phi}(X)})_{\mathscr{D}^{\Phi}(X)}$.
Since $\Phi$ is $\Delta_2$-regular, convergence in~$\ell^\Phi(X)$ 
in norm is equivalent to convergence in the modular~$\rho_\Phi$ (see \cite[Theorem~9.4]{KraRu}).
Consequently, in $\mathscr{D}^{\Phi}(X)$, convergence in the norm
$\|\cdot\|_{\mathscr{D}^\Phi(X)}$ is equivalent to convergence in~ $\rho$. 
We may thus apply \cite[Corollary~15, p.~86]{RaRen91}, by which 
$\rho(g_n) \to 0$ as $n \to \infty$ implies $\rho(g+g_n) \to \rho(g)$ 
as $n \to \infty$. Consequently,
$$
\rho(f_n) = \rho((f_n - f) + f) \to \rho(f),
$$ 
and thus the condition $f_n \in B$ for all $n$ implies that $f \in B$. Consequently,
 $B$ is a bounded closed convex subset in the reflexive Banach space $\mathscr{D}^\Phi(X)$. 
Hence, as follows from Kakutani's Theorem (see, for example, \cite[Corollary 10.6.2]{Kut}), 
$B$ is compact in the weak topology. Therefore, the weakly lower semi-continuous functional 
$$
F(g) = \rho(f - g),~g \in (\overline{\ell_G^{\Phi}(X)})_{\mathscr{D}^{\Phi}(X)}
$$
attains its minimum~$d$ on $B$. Let $F(u) = d$ and $h = f - u$. 
For $v \in \ell_G^{\Phi}(X)$, consider the smooth function 
$$
F_{v}(t) = \rho(f - (u - tv)), \quad t \in\mathbb{R}.
$$ 
Obviously, the minimum of~$F$ is attained for $t = 0$, which means that
$$
\left.\frac{dF_{v}(t)}{dt}\right|_{t = 0} 
= \rho'_{h}(v)=\langle \Delta_{\Phi}h, \, v\rangle =0 \,\mbox{ for all } v \in \ell_G^{\Phi}(X).
$$
Therefore, $\langle \Delta_{\Phi}h, \, \delta_x\rangle =0$ for all $x \in X$, and, consequently, 
$ h\in HD^{\Phi}(X)$ by Lemma~\ref{harm_cond}.

Prove the uniqueness. Suppose that $f = u_1+h_1 = u_2 + h_2$. 
Appealing to Lemma~\ref{harm_cond}, we conclude that 
$\langle \Delta_{\Phi}h_1, \, h_1 - h_2\rangle = \langle \Delta_{\Phi}h_1, \, u_1 - u_2\rangle =0$,
and, similarly, $\langle \Delta_{\Phi}h_2, \, h_1 - h_2\rangle = 0$. 
By Proposition~\ref{dist}, we have  $h_1 - h_2 = u_1 - u_2 \in D^{\Phi}(X)^{G}$. 
\end{proof}

\begin{corollary}\label{harm+decomp}
If the~action of~$G$ on~$X$ is free then $\overline{H}^{1}(G,\,\ell^{\Phi}(X))$ is 
naturally identified with $HD^{\Phi}(X)/D^{\Phi}(X)^{G}$.
\end{corollary}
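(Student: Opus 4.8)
The plan is to read off the identification by combining the two main results of the section: Theorem~\ref{top_isomor}, which computes $\overline{H}^1$ as a quotient of $\mathscr{D}^\Phi(X)$, and Theorem~\ref{decomp}, which provides a canonical harmonic representative for each class. Throughout I keep the standing hypotheses that make both applicable: $G$ acts freely on the countable set $X$, and $\Phi$ is a continuously differentiable strictly convex $N$-function of class $\Delta_2(0)\cap\nabla_2(0)$.

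First I would set $A=(\overline{\ell_G^{\Phi}(X)})_{\mathscr{D}^\Phi(X)}$ and recall from Theorem~\ref{top_isomor}(2) that the topological isomorphism $\alpha$ descends to an isomorphism $\overline{H}^1(G,\ell^\Phi(X))\cong \mathscr{D}^\Phi(X)/A$; write $q\colon\mathscr{D}^\Phi(X)\to\mathscr{D}^\Phi(X)/A$ for the quotient projection. I would then record the easy but essential remark that, under the projection $D^\Phi(X)\to\mathscr{D}^\Phi(X)$, the subspace $\ell^\Phi(X)$ maps onto $\ell_G^\Phi(X)$ (its intersection with the kernel $D^\Phi(X)^G$ is exactly $\ell^\Phi(X)^G$), so that the summand $u\in(\overline{\ell^\Phi(X)})_{D^\Phi(X)}$ produced by Theorem~\ref{decomp} has image precisely in $A$. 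This is what makes the two notations compatible.

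The natural map $\Theta$ is then the restriction of $q$ to the image of $HD^\Phi(X)$ in $\mathscr{D}^\Phi(X)$, i.e. $\Theta\colon HD^\Phi(X)/D^\Phi(X)^G\to \overline{H}^1(G,\ell^\Phi(X))$ sends the class of a harmonic $h$ to the cohomology class of the cocycle $\alpha(h)=\lambda_X(\cdot)h-h$. Surjectivity is immediate from the existence part of Theorem~\ref{decomp}: given a class, represent it by $f\in\mathscr{D}^\Phi(X)$, write $f=u+h$ with $u\in A$ and $h\in HD^\Phi(X)$, and observe that $f-h=u\in A=\ker q$, so $q(f)=q(h)$. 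Injectivity is the uniqueness part: if $q(h_1)=q(h_2)$ for harmonic $h_1,h_2$ then $h_1-h_2\in A$, so $h_2$ admits the two decompositions $h_2=0+h_2$ and $h_2=(h_2-h_1)+h_1$; the uniqueness clause of Theorem~\ref{decomp} forces $h_1=h_2$ in $\mathscr{D}^\Phi(X)$, i.e. $h_1-h_2\in D^\Phi(X)^G$. Hence $\Theta$ is a bijection, which is the asserted natural identification.

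The point I would flag as the genuine content, rather than an obstacle, is that $HD^\Phi(X)$ is in general not a linear subspace of $D^\Phi(X)$: since $\Phi'$ is nonlinear for $\Phi$ not quadratic, the equation $\Delta_\Phi h=0$ is nonlinear, so $\Theta$ should be read as a natural bijection of sets (equivalently, each reduced class has a unique harmonic representative modulo $D^\Phi(X)^G$), not as a linear isomorphism. All the analytic difficulty---reflexivity, weak compactness of the minimizing set, and strict convexity yielding uniqueness---has already been absorbed into Theorem~\ref{decomp}; here only the two formal bookkeeping steps above remain, together with the identification $A=(\overline{\ell_G^{\Phi}(X)})_{\mathscr{D}^\Phi(X)}$ furnished by Theorem~\ref{top_isomor}.
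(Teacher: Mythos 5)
Your proof is correct and follows exactly the route the paper intends: the paper's own ``proof'' is the one-line remark that the corollary is obtained by analyzing the proofs of Theorem~\ref{top_isomor} and Theorem~\ref{decomp}, and your argument is precisely that analysis carried out in detail (surjectivity from the existence part of the decomposition, injectivity from its uniqueness clause, plus the compatibility of $(\overline{\ell^\Phi(X)})_{D^\Phi(X)}$ with $(\overline{\ell_G^{\Phi}(X)})_{\mathscr{D}^\Phi(X)}$ under the quotient). Your observation that the identification is a natural bijection of sets rather than a linear isomorphism, since $\Delta_\Phi$ is nonlinear, is a correct and worthwhile clarification that the paper leaves implicit.
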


{\it Proof} is obtained by analyzing the proofs of Theorem~\ref{top_isomor} and Theorem~\ref{decomp}.  $\square$

%%%%%%%%%%%%%%%%%%%%%%%%%%%%%%%%%%%%%%%%%%%%%%%%%%%%%%%%%%%%%%%%%%%%%%%%%%%%%%%%%%%%%%%%%%%%%%%%%%%%%%%%%%%%

\section{Normal Subgroups with Large Centralizer}\label{central}

\begin{theorem}\label{large_centr}
Let $\Phi$ be an $N$-function in $\Delta_2(0)\cap \nabla_2(0)$ and let  
$N$ be an infinite finitely generated normal subgroup of a finitely generated group~$G$.
If $N$ is non-amenable and its centralizer $Z_{G}(N)$ is infinite then 
$\overline{H}^{1}(G,\ell^{\Phi}(G)) = 0$.
\end{theorem}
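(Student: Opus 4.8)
The plan is to mimic the structure of the proof of Martin--Valette's Theorem~4.2, which handles the $\ell^p$ case, using the $\Phi$-harmonic machinery developed in Section~\ref{phi-harm}. The strategy is to show that $\overline H^1(N,\ell^\Phi(G)|_N)$ vanishes and then transport this vanishing up to $G$ by exploiting the large centralizer. Concretely, I would first invoke Lemma~\ref{cohom-rest}: since $N$ is nonamenable, Proposition~\ref{or-cohom} gives $H^1(N,\ell^\Phi(N))=\overline H^1(N,\ell^\Phi(N))$, and by the hypothesis-free part of the argument one reduces the claim to understanding cocycles on $N$ with values in $\ell^\Phi(G)|_N$. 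Because $N$ is infinite, $\ell^\Phi(G)^N=0$, so by Proposition~\ref{rest}(2) the restriction $\mathrm{Rest}_G^N$ is injective; hence it suffices to prove that $\overline H^1(N,\ell^\Phi(G)|_N)^{G/N}=0$, or more precisely that every $G/N$-invariant class restricts to zero.

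The core of the argument is the centralizer condition. Let $b\in Z^1(G,\ell^\Phi(G))$ and restrict to $N$. I would apply Corollary~\ref{harm+decomp}, which identifies $\overline H^1(N,\ell^\Phi(G)|_N)$ with the space $HD^\Phi/D^\Phi(\cdot)^N$ of $\Phi$-harmonic Dirichlet-finite functions modulo constants on $N$-orbits, the action of $N$ on $X=G$ being by left translation. The key observation is that the harmonic representative $h$ attached to the restricted cocycle is \emph{canonical}: by the uniqueness clause of Theorem~\ref{decomp}, the Hodge-type decomposition $f=u+h$ is unique up to $D^\Phi(X)^N$, so $h$ is natural and therefore equivariant under any automorphism of the situation commuting with the $N$-action. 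Each element $z\in Z_G(N)$ acts on $X=G$ by right translation $x\mapsto xz$, and since right and left translations commute, this right action commutes with the $N$-action defining $\Delta_\Phi$ and with the norm. Consequently $z$ permutes $\Phi$-harmonic functions and, by uniqueness of the decomposition, fixes the class of $h$ in $HD^\Phi/D^\Phi(X)^N$.

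Now the finiteness argument: the centralizer $Z_G(N)$ is infinite, so it provides infinitely many isometries of $\ell^\Phi(G)$ that fix the harmonic class of~$b|_N$. I would show that a nonzero $\Phi$-Dirichlet-finite harmonic function invariant under the infinite right-translation action of $Z_G(N)$ forces the Dirichlet energy $\rho(h)=\sum_{s\in S}\sum_{x}\Phi(h(s^{-1}x)-h(x))$ to be infinite, contradicting $h\in D^\Phi(X)$. The mechanism is that if $h$ is (projectively) invariant under infinitely many right translations, its gradient $\lambda_X(s)h-h$ has its $\ell^\Phi$-mass replicated along the infinite $Z_G(N)$-orbit, so finiteness of the total energy is impossible unless the gradient vanishes, i.e.\ $h\in D^\Phi(X)^N$. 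Hence the harmonic part is trivial, $\overline H^1(N,\ell^\Phi(G)|_N)=0$, and by the injectivity from Proposition~\ref{rest}(2) together with the exact sequence~\eqref{ex-seq} we conclude $\overline H^1(G,\ell^\Phi(G))=0$.

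The main obstacle I anticipate is making the replication-of-energy step rigorous in the Orlicz setting. In the $\ell^p$ case one literally sums $p$-th powers over the orbit and uses additivity of the modular; here the modular $\rho$ is additive over disjoint supports, so the same idea works provided the right translates of the support regions are genuinely disjoint (or have bounded overlap). Controlling this requires care because the gradient $\lambda_X(s)h-h$ is supported throughout $G$ rather than on disjoint blocks, so I would pass to the modular estimate and use the $\Delta_2(0)\cap\nabla_2(0)$ reflexivity and the equivalence of norm and modular convergence (as in the proof of Theorem~\ref{decomp}) to turn invariance under an infinite isometry group into a divergence of $\rho(h)$. The subtlety is purely that Orlicz modulars are not homogeneous, so the scaling tricks available for $\|\cdot\|_p^p$ must be replaced by monotonicity of $\Phi$ and the two-sided regularity hypotheses; handling this correctly is where the real work lies.
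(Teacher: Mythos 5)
Your overall skeleton (restrict to $N$, identify $\overline{H}^{1}(N,\ell^{\Phi}(G)|_{N})$ with harmonic functions via Corollary~\ref{harm+decomp}, kill the harmonic part using the centralizer, then climb back to $G$ via Proposition~\ref{rest}) matches the paper's. But the step that carries all the weight --- ``each $z\in Z_{G}(N)$ acts by \emph{right} translation, and by uniqueness of the decomposition it fixes the class of $h$'' --- is a non sequitur, and it is exactly where your argument breaks. The uniqueness clause of Theorem~\ref{decomp} tells you that \emph{if} $z$ fixes the class of $f$ modulo the closure of the coboundary space, \emph{then} it fixes the class of the harmonic part $h$; it does not tell you that $z$ fixes the class of $f$ in the first place, and for right translations there is no reason it should. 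The invariance that the cocycle identity actually supplies is invariance of the restricted class under the twisted action $(g\cdot b)(n)=\lambda_{G}(g)\,b(g^{-1}ng)$ (this is why $\mathrm{Rest}_G^N$ lands in $H^{1}(N,\cdot)^{G/N}$ in~(\ref{ex-seq}): one computes $(g\cdot b)(n)=b(n)+\lambda_G(n)b(g)-b(g)$). For $g\in Z_{G}(N)$ the conjugation is trivial, so this action is \emph{left} translation by $g$; the centralizer hypothesis is then used a second time, to commute $\lambda_{G}(g)$ past $\lambda_{G}(n)$ and transfer the invariance from classes to the $\ell^{\Phi}$-functions $\lambda_{G}(n)f-f$, which are then constant on the infinite $Z_G(N)$-orbits, hence zero. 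Your mechanism, by contrast, never uses that $z$ centralizes $N$: right translations commute with \emph{all} left translations. So if your argument were correct, it would prove that $\overline{H}^{1}(G,\ell^{\Phi}(G))=0$ whenever $G$ has an infinite, finitely generated, nonamenable normal subgroup, with no condition on the centralizer at all --- which is false: take $G=N=F_{2}$ and $\Phi(t)=t^{2}$, for which the first reduced $\ell^{2}$-cohomology of the free group is nonzero while $Z_{G}(N)$ is trivial.

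Two further points. First, Theorem~\ref{decomp} and Corollary~\ref{harm+decomp} require $\Phi$ to be continuously differentiable and strictly convex, while the theorem only assumes $\Phi\in\Delta_{2}(0)\cap\nabla_{2}(0)$; before invoking them you must replace $\Phi$ by an equivalent smooth strictly convex $N$-function (as the paper does, via Rao's theorem), checking that regularity, reflexivity and the cohomology are unchanged. Second, once the invariance is set up with the correct (left-translation) action, the ``energy replication'' difficulty you anticipate simply evaporates: what appears is an $\ell^{\Phi}$-function that is literally constant on the orbits of an infinite group acting freely on $G$, and any such function vanishes identically --- no modular estimates or $\Delta_{2}/\nabla_{2}$ manipulations are needed at that point. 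What does require care, and what your sketch glosses over, is the reduced-versus-unreduced bookkeeping when passing from $N$ back to $G$: Proposition~\ref{rest} concerns unreduced cohomology, and the paper handles the mismatch by a case analysis on $N\cap Z_{G}(N)$ --- if it is infinite, the trivial action of $N$ on $H^{1}(N,\cdot)$ forces the harmonic classes to lie in $\mathscr{D}^{\Phi}(G)^{N\cap Z_{G}(N)}=0$ and one concludes with Lemma~\ref{cohom-rest}; if it is finite, one uses nonamenability of $N$ (Proposition~\ref{or-cohom}) to identify $\overline{H}^{1}$ with $H^{1}$ and only then applies the injectivity of the restriction map.
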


\begin{proof} 
It is known that every Orlicz space $\ell^{\Phi}(G)$ is topologically isomorphic 
to the space $\ell^{\tilde{\Phi}}(G)$, where $\tilde{\Phi}$ is a continuously 
differentiable strictly convex function $N$-function such that 
\begin{equation}\label{equiv-func}
\Phi(u)\le \tilde{\Phi}(u)\le 2\Phi(u)
\end{equation}
(see \cite{KraRu,Rao68}). Thus, we have topological isomorphisms
$H^1(G, \ell^{\Phi}(G)) \cong H^{1}(G, \ell^{\tilde{\Phi}}(G))$ and 
$\overline{H}^1(G, \ell^{\Phi}(G)) \cong \overline{H}^{1}(G, \ell^{\tilde{\Phi}}(G))$. 
Inequalities~(\ref{equiv-func}) imply that $\Phi$ and~$\tilde\Phi$
are equivalent $N$-functions \cite{KraRu,RaRen91}. Hence, if $\Phi\in\Delta_2(0)$ 
then $\tilde\Phi\in\Delta_2(0)$ (see, for example, \cite[p.~33]{RaRen91});  
moreover, for the functions $\Psi$ and $\tilde\Psi$ complementary to~$\Phi$ 
and~$\tilde\Phi$ respectively, from~(\ref{equiv-func}) we have 
$$
\Psi\left(\frac{v}{2}\right)\le \tilde{\Psi}(v)\le \Psi(v).
$$
This implies that $\nabla_2$-regularity is also preserved in passing 
from~$\Phi$ to $\tilde\Phi$ and hence $\ell^{\tilde\Phi}$ is reflexive.
Consequently, we may assume  without loss of generality that $\Phi$ itself 
is a continuously differentiable strictly convex $N$-function. 

Observe first that $\mathscr{D}_{\Phi}(G)^{Z_{G}(N)} = 0$. 
Indeed, let $[f] \in \mathscr{D}_{\Phi}(G)^{Z_{G}(N)}$ be the equivalence class 
of $f$. Then $\lambda_{G}(g)f - f \in D^\Phi(G)^{N}$ for all $g \in Z_{G}(N)$; 
in other words, $\lambda_{G}(n)( \lambda_{G}(g)f -f) = \lambda_{G}(g)f - f$ 
for all $n\in N$. This is equivalent to 
$\lambda_{G}(g)( \lambda_{G}(n)f -f) = \lambda_{G}(n)f - f$ because 
$n,\,g \in Z_{G}(N)$. Since $[f] \in \mathscr{D}_{\Phi}(G)^{Z_{G}(N)}$, we have 
$\lambda_{G}(n)f -f \in \ell^{\Phi}(G)$, which implies that 
$ \lambda_{G}(n)f -f \in \ell^{\Phi}(G)^{Z_{G}(N)}$. Recalling that 
$Z_{G}(N)$ is infinite, we conclude that $\lambda_{G}(n)f -f$ vanishes 
idenically; i.e., $f \in D^{\Phi}(G)^{N}$. Thus, $[f]= 0$. 

Return to the proof of the theorem. Let 
$\alpha: \mathscr{D}^{\Phi}(G) \to Z^{1}(N, \ell^{\Phi}(G)|_{N})$ 
be the same as in Theorem~\ref{top_isomor}. As was mentioned above, the action 
of~$G$ on $\mathscr{D}^{\Phi}(G)$ induces an action on 
$Z^{1}(N, \ell^{\Phi}(G)|_{N})$, which is defined by 
$$
(g\cdot b)(n) = \lambda(g)(b(g^{-1}ng)), \text{where 
$b \in Z^{1}(N, \ell^{\Phi}(G)|_{N})$, $g \in G$, $n \in N$}.
$$ 
Since this action leaves $B^{1}(N, \ell^{\Phi}(G)|_{N})$ invariant,
it is well defined on $H^{1}(N, \ell^{\Phi}(G)|_{N})$. By computation, we obtain 
$$
(m\cdot b)(n) = b(n) + (\lambda(n)b(m) - b(m)) \mbox{ for } m \in N.
$$
In view of the above, the action of $N$ on $H^{1}(N, \ell^{\Phi}(G)|_{N})$ 
is trivial. It is easy to verify that $\alpha$ is $G$-equivariant. Hence, 
by Corollary~\ref{harm+decomp}, there exists a  
$Z_{G}(N)$-equivariant bijection between $HD^{\Phi}(G)/{D}^{\Phi}(G)^N$ 
and 
$\overline{H}^{1}(N, \ell^{\Phi}(G)|_{N})$. Now, there are two cases depending 
on whether $N \cap Z_{G}(N)$ is infinite or finite. 

First, assume that it is infinite. The induced action of~$N$ leaves the elements
 of $HD^{\Phi}(G)/D^{\Phi}(G)^N$ fixed since it is trivial. Repeating the above
argument, we have $\mathscr{D}_{\Phi}(G)^{N \cap Z_{G}(N)} = 0$, and hence 
$HD^{\Phi}(G)/D^{\Phi}(G)^N = 0$. It follows that 
$\overline{H}^{1}(N, \ell^{\Phi}(G)|_{N}) =~0$, and, by Lemma~\ref{cohom-rest}, 
$\overline{H}^{1}(G, \ell^{\Phi}(G)) = 0$.

Now, suppose that the intersection is finite. For $N$ non-amenable, 
we have $\overline{H}^{1}(N, \ell^{\Phi}(G)|_{N}) = H^{1}(N, \ell^{\Phi}(G)|_{N})$. 
Similarly to previous situations, we conclude that  
$H^{1}(N, \ell^{\Phi}(G)|_{N})^{Z_{G}(N) / (N \cap Z_{G}(N))}= 0$, and so  
$H^{1}(N, \ell^{\Phi}(G)|_{N})^{G/N} = 0$. This and Lemma~\ref{cohom-rest} 
imply that $H^{1}(G, \ell^{\Phi}(G)) = 0 = \overline{H}^{1}(G, \ell^{\Phi}(G))$.
\end{proof}

The same argument that in the proof of Theorem~\ref{large_centr} 
yields the following

\begin{corollary}\label{inf-center}
If $\Phi\in \Delta_2(0)\cap\nabla_2(0)$ and a finitely generated group $G$ has 
infinite center then $\overline{H}^{1}(G, \ell^{\Phi}(G)) = 0$.   
\end{corollary}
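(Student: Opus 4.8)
The plan is to obtain the corollary as a special case of the mechanism behind Theorem~\ref{large_centr}, taking the distinguished normal subgroup inside the center. Let $N$ be an infinite, finitely generated subgroup of $Z(G)$; since $Z(G)$ is infinite such an $N$ is available — most simply $N=\langle z\rangle\cong\mathbb{Z}$ when $Z(G)$ contains an element $z$ of infinite order. Being central, $N$ is automatically normal in $G$, and every element of $G$ commutes with every element of $N$, so $Z_G(N)=G$; in particular $Z_G(N)$ is infinite and $N\cap Z_G(N)=N$ is infinite.

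With this $N$ I would run the proof of Theorem~\ref{large_centr} verbatim. First, as there, I replace $\Phi$ by an equivalent continuously differentiable strictly convex $N$-function of class $\Delta_2(0)\cap\nabla_2(0)$, so that $\ell^\Phi(G)$ and $\mathscr{D}^\Phi(G)$ are reflexive and the apparatus of Section~\ref{phi-harm} applies. Since $N$ is infinite, $\ell^\Phi(G)^N=0$, and the $G$-equivariant topological isomorphism $\alpha:\mathscr{D}^\Phi(G)\to Z^1(N,\ell^\Phi(G)|_N)$ of Theorem~\ref{top_isomor}, combined with Corollary~\ref{harm+decomp}, yields a $Z_G(N)$-equivariant identification of $HD^\Phi(G)/D^\Phi(G)^N$ with $\overline{H}^1(N,\ell^\Phi(G)|_N)$, on which the induced $N$-action is trivial.

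The decisive point is the case distinction closing that proof, which depends on whether $N\cap Z_G(N)$ is finite or infinite. For our central $N$ we always have $N\cap Z_G(N)=N$ infinite, so we are permanently in the infinite branch — and this is precisely the branch in which nonamenability of $N$ is never invoked. Repeating the computation $\mathscr{D}^\Phi(G)^{Z_G(N)}=0$ with the infinite group $N$ in place of $Z_G(N)$ (here the commutation step is immediate because $N$ is central), the triviality of the $N$-action forces $HD^\Phi(G)/D^\Phi(G)^N\subseteq\mathscr{D}^\Phi(G)^N=0$; hence $\overline{H}^1(N,\ell^\Phi(G)|_N)=0$, and Lemma~\ref{cohom-rest} gives $\overline{H}^1(G,\ell^\Phi(G))=0$.

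The only real content beyond quoting Theorem~\ref{large_centr} is thus the observation that its hypothesis ``$N$ nonamenable'' is inessential for the conclusion once $N\cap Z_G(N)$ is infinite: nonamenability enters solely in the finite-intersection branch, through the passage $\overline{H}^1=H^1$ furnished by Proposition~\ref{or-cohom}, which our $N$ never triggers. The one genuinely delicate step is the existence of a finitely generated infinite central subgroup: it is trivial when $Z(G)$ carries an element of infinite order, but when $Z(G)$ is infinite torsion (hence locally finite) every finitely generated central subgroup is finite and the direct argument above does not apply. In that situation one would have to treat $Z(G)$ itself — for example by exhausting it by finite central subgroups and extracting almost invariant vectors for the $Z(G)$-action on $\ell^\Phi(G)$ via Proposition~\ref{amenab-subgr} — to reach the same vanishing. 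I expect this torsion case to be the main obstacle.
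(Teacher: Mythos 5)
Your proposal diverges from the paper's argument in a way that opens a genuine gap. The paper's ``same argument'' is meant to be run with $N=G$ itself: $G$ is an infinite, finitely generated, normal subgroup of itself, its centralizer $Z_G(G)=Z(G)$ is infinite, and $N\cap Z_G(N)=Z(G)$ is infinite, so one lands in the infinite-intersection branch of the proof of Theorem~\ref{large_centr} --- where the conclusion $\overline{H}^{1}(N,\ell^{\Phi}(G)|_{N})=0$ \emph{is already} the assertion of the corollary, since $N=G$. That degenerate choice is precisely what makes nonamenability dispensable: no passage from the cohomology of a subgroup to the cohomology of $G$ is ever needed. Your choice of a proper central subgroup $N\subsetneq G$ reinstates that passage, and this is where your proof breaks. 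Your last step claims that $\overline{H}^{1}(N,\ell^{\Phi}(G)|_{N})=0$ plus Lemma~\ref{cohom-rest} yields $\overline{H}^{1}(G,\ell^{\Phi}(G))=0$; but Lemma~\ref{cohom-rest} only compares the two coefficient modules $\ell^{\Phi}(N)$ and $\ell^{\Phi}(G)|_{N}$ \emph{for the same group} $N$ --- it says nothing about the cohomology of $G$. The only bridges from $N$ to $G$ available in the paper are Proposition~\ref{rest}(2) and Theorem~\ref{inf-norm}, and both operate with \emph{unreduced} cohomology: one needs $H^{1}(N,\ell^{\Phi}(G)|_{N})=0$, and to deduce that from reduced vanishing one must invoke Proposition~\ref{or-cohom}, which requires $N$ nonamenable. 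Your $N$ is central, hence abelian, hence amenable; indeed, by Proposition~\ref{or-cohom} and Lemma~\ref{cohom-rest}, for your $N$ one has $H^{1}(N,\ell^{\Phi}(G)|_{N})\neq \overline{H}^{1}(N,\ell^{\Phi}(G)|_{N})=0$, so restriction-injectivity on unreduced cohomology gives nothing, and no injectivity statement for restriction on \emph{reduced} cohomology exists in the paper. Consequently your reading that nonamenability ``enters solely in the finite-intersection branch'' is mistaken: the paper's citation of Lemma~\ref{cohom-rest} at the end of the infinite branch is shorthand for the chain through Proposition~\ref{or-cohom} and Proposition~\ref{rest}, which uses nonamenability too; only $N=G$ bypasses it.

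The second gap you flag yourself: if $Z(G)$ is infinite torsion (this case is real --- by a theorem of P.~Hall, every countable abelian group occurs as the center of a finitely generated group), there is no infinite finitely generated central subgroup, and your sketched fallback via exhaustion by finite subgroups and Proposition~\ref{amenab-subgr} is not an argument. Taking $N=G$ removes this difficulty as well, because the group carrying the $\mathscr{D}^{\Phi}$-structure is then the finitely generated group $G$, while $Z(G)$ enters only through the fixed-point computation $\mathscr{D}^{\Phi}(G)^{Z(G)}=0$, which uses nothing about $Z(G)$ beyond its infiniteness and commutation. Concretely, after the usual replacement of $\Phi$ by an equivalent smooth strictly convex $N$-function, Corollary~\ref{harm+decomp} gives $\overline{H}^{1}(G,\ell^{\Phi}(G))\cong HD^{\Phi}(G)/D^{\Phi}(G)^{G}$; since $G$ acts trivially on its own (reduced) cohomology, the image of $HD^{\Phi}(G)/D^{\Phi}(G)^{G}$ in $\mathscr{D}^{\Phi}(G)$ is fixed by $Z(G)$, hence zero.
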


As in~\cite{MV07}, we find a sufficient condition for the triviality of the first
$\ell^\Phi$-cohomology of the wreath product of two countable discrete groups.

\begin{theorem}\label{wreath} 
Suppose that $G_1$, $G_2$ are nontrivial countable discrete groups, 
$\Phi\in\Delta_2(0)\cap\nabla_2(0)$, and $G=G_1 \wr G_2$. If $G_1$ is nonamenable then 
$H^{1}(G, \ell^{\Phi}(G)) = 0$.  
\end{theorem}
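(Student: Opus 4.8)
The plan is to imitate the mechanism of Theorem~\ref{large_centr}, but to deploy it in two successive restriction steps so as to sidestep the fact that the base group of a wreath product is not finitely generated. Write $G=G_1\wr G_2=M\rtimes G_2$, where $M=\bigoplus_{t\in G_2}G_1^{(t)}$ is the base group and $G_1^{(t)}$ denotes the copy of $G_1$ indexed by $t\in G_2$. As in the proof of Theorem~\ref{large_centr}, I would first replace $\Phi$ by an equivalent continuously differentiable strictly convex $N$-function in $\Delta_2(0)\cap\nabla_2(0)$, so that the $\Phi$-harmonic machinery of Section~\ref{phi-harm} becomes available. Since $M$ is an infinite normal subgroup of~$G$, we have $\ell^{\Phi}(G)^{M}=0$, and Proposition~\ref{rest}(2) makes $\mathrm{Rest}_G^M:H^{1}(G,\ell^{\Phi}(G))\to H^{1}(M,\ell^{\Phi}(G)|_{M})^{G/M}$ injective. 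By Lemma~\ref{cohom-rest} the target vanishes as soon as $H^{1}(M,\ell^{\Phi}(M))=0$, so the whole theorem reduces to establishing
$$
H^{1}\bigl(M,\ell^{\Phi}(M)\bigr)=0,\qquad M=\bigoplus_{t\in G_2}G_1^{(t)}.
$$

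For the second step I would split off a single copy. Put $A=G_1^{(e)}$ and $M'=\bigoplus_{t\neq e}G_1^{(t)}$, so that $M=A\times M'$; here $A$ is finitely generated, infinite and nonamenable, while $M'$ is infinite and centralizes~$A$. Applying Proposition~\ref{rest}(2) to the normal subgroup $A$ of~$M$ (again $\ell^{\Phi}(M)^{A}=0$ because $A$ is infinite) yields an injection
$$
H^{1}\bigl(M,\ell^{\Phi}(M)\bigr)\hookrightarrow H^{1}\bigl(A,\ell^{\Phi}(M)|_{A}\bigr)^{M/A},\qquad M/A\cong M'.
$$
Because $A$ is nonamenable, Proposition~\ref{amenab-subgr} shows that $\ell^{\Phi}(M)|_{A}$ has no almost invariant vectors, whence $B^{1}$ is closed and $H^{1}(A,\ell^{\Phi}(M)|_{A})=\overline{H}^{1}(A,\ell^{\Phi}(M)|_{A})$; and since $A$ is finitely generated and acts freely on the countable set $X=M$ by left translations, Corollary~\ref{harm+decomp} identifies this space with $HD^{\Phi}(X)/D^{\Phi}(X)^{A}$. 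The action of $M/A\cong M'$ on it is, as in Theorem~\ref{large_centr}, the translation action $[f]\mapsto[\lambda_X(g)f]$, since each $g\in M'$ centralizes~$A$.

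It then remains to show that the $M'$-invariants of $HD^{\Phi}(X)/D^{\Phi}(X)^{A}$ vanish, and this is the heart of the argument. Let $[f]$ be such an invariant, represented by $f\in HD^{\Phi}(X)$. Invariance under $g\in M'$ means $\lambda_X(g)f-f\in D^{\Phi}(X)^{A}$, i.e. $\lambda_X(g)f-f$ is constant on the $A$-orbits of~$X$. Exactly as in the computation $\mathscr{D}_{\Phi}(G)^{Z_{G}(N)}=0$ of Theorem~\ref{large_centr}, the commutation $gn=ng$ (for $g\in M'$, $n\in A$) lets me rewrite the relation $\lambda_X(n)(\lambda_X(g)f-f)=\lambda_X(g)f-f$ as $\lambda_X(g)(\lambda_X(n)f-f)=\lambda_X(n)f-f$ for every $g\in M'$. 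Thus the function $\lambda_X(n)f-f\in\ell^{\Phi}(X)$ is invariant under the infinite group $M'$ acting freely by left translations; an $\ell^{\Phi}$-function constant on the resulting infinite orbits must vanish. Hence $\lambda_X(n)f=f$ for all $n\in A$, so $f\in D^{\Phi}(X)^{A}$ and $[f]=0$. This forces $H^{1}(A,\ell^{\Phi}(M)|_{A})^{M'}=0$, hence $H^{1}(M,\ell^{\Phi}(M))=0$, and finally $H^{1}(G,\ell^{\Phi}(G))=0$.

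I expect the last vanishing of the $M'$-invariants to be the main obstacle: it is where the infinite centralizer $M'$ of the finitely generated nonamenable factor $A$ is genuinely used, and it must be routed through the $\Phi$-harmonic decomposition rather than a naive weak-limit argument, since reduced and unreduced cohomology have to be matched (via nonamenability of~$A$ and Proposition~\ref{amenab-subgr}) before Corollary~\ref{harm+decomp} can be invoked. The reason for the two-stage restriction, rather than a single appeal to Theorem~\ref{large_centr}, is precisely that $M$ fails to be finitely generated when $G_2$ is infinite, so the harmonic-function norm on $\mathscr{D}^{\Phi}(M)$ is unavailable and Theorem~\ref{large_centr} cannot be applied to $M$ directly; passing instead to the finitely generated factor $A$ restores the machinery while retaining an infinite centralizer.
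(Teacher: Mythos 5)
Your proposal is correct, and its core is genuinely different from the paper's. Both arguments share the same outer skeleton: reduce, via Proposition~\ref{rest}(2) (using $\ell^{\Phi}(G)^{M}=0$ for the infinite normal base group) and Lemma~\ref{cohom-rest}, to proving $H^{1}(M,\ell^{\Phi}(M))=0$ for $M=\bigoplus_{t\in G_2}G_1^{(t)}$. At that point the paper does something entirely different from you: it renorms $\ell^{\Phi}(M)$ to be \emph{uniformly} convex (Rao--Ren), translates a hypothetical nonzero class into a fixed-point-free affine isometric action of the product $M=G_1\times G_0$, and invokes the rigidity theorem of Bader--Furman--Gelander--Monod (Theorem~7.1 of \cite{BFGM}), together with the absence of almost invariant vectors (nonamenability of $M$, via Rao) and of invariant vectors (infiniteness of the factors), to reach a contradiction. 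You instead perform a second restriction, to the finitely generated factor $A=G_1^{(e)}$, and rerun the centralizer mechanism from the proof of Theorem~\ref{large_centr}: Corollary~\ref{harm+decomp} applied to $A$ acting freely on $X=M$ --- legitimate because the harmonic machinery of Section~\ref{phi-harm} requires finite generation only of the \emph{acting} group, not of the ambient one --- followed by the translation action of the infinite centralizing group $M'=\bigoplus_{t\neq e}G_1^{(t)}$ and the observation that an $\ell^{\Phi}$-function constant on infinite $M'$-orbits vanishes. Your matching of reduced and unreduced cohomology through nonamenability of $A$ (Proposition~\ref{amenab-subgr} plus Guichardet-type closedness of $B^{1}$) is exactly the step the paper itself performs inside the proof of Lemma~\ref{cohom-rest}, so you stay within the paper's own toolkit. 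As to what each route buys: the paper's proof is shorter but leans on the deep external theorem of \cite{BFGM} and on the uniformly convex renorming; yours is self-contained relative to Sections~\ref{1-cohom}--\ref{phi-harm}, needs only the smooth strictly convex renorming and reflexivity, and exhibits the wreath-product theorem as a consequence of the same mechanism as Theorem~\ref{large_centr} (of its \emph{argument}, not its statement, since $M$ need not be finitely generated and $Z_G(M)$ can be finite). Your diagnosis of why Theorem~\ref{large_centr} cannot simply be quoted --- failure of finite generation of $M$ when $G_2$ is infinite --- is also accurate and is precisely why the paper treats the wreath product as a separate theorem.
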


\begin{proof}
Put $N=G_1 \times (\oplus_{G_2} G_1)$. Then $N$ is nonamenable. Represent $N$ as
$N=G_1 \times G_0$, where $G_0= \oplus_{G_2\setminus\{e\}} G_1$.  
By~\cite[Theorem~2, pp.~297--298]{RaRen91}, we can replace our $N$-function 
$\Phi\in \Delta_2(0)\cap \nabla_2(0)$ by an equivalent $N$-function 
$\Phi_1\in \Delta_2(0)\cap\nabla_2(0)$ such that the spaces $\ell^\Phi(N)$
and $\ell^{\Phi_1}(N)$ are isomorphic and $\ell^{\Phi_1}(N)$ is uniformly convex.
Of course, $\ell^{\Phi_1}(N)$ is also reflexive. The replacement of $\Phi$
by $\Phi_1$ does not change cohomology, and so we will assume without 
loss of generality that $\ell^\Phi(N)$ is uniformly convex. 
The left regular representation $\lambda_N$ of $N$ on $\ell^\Phi(N)$ is such that
$\lambda_N|_{G_1}$ and $\lambda_N|_{G_0}$ do not have invariant vectors (because
$G_1$ is infinite) and $\lambda_N$ does not have almost invariant vectors 
(by the nonamenability of~$N$; see \cite[Proposition~2, pp.~387--389]{Ra04a}).
Suppose that 
\begin{equation}\label{cohom-nontr}
H^{1}(N, \ell^{\Phi}(N)) \ne 0.      
\end{equation} 
As is well known (see Lemmas~2.2.1 and~2.2.6 in \cite{BeHaV08}, which are formulated for actions
on Hilbert modules but the proofs also hold for Banach modules), 
if $G$ is a topological group and $V$ is a $G$-module
then there is a correspondence between $Z^1(G,V)$ and continuous actions of~$G$ by 
affine isometries on~$V$; moreover, the affine action corresponding to a cocycle has 
fixed points if and only if it is a coboundary. Therefore, relation~(\ref{cohom-nontr}) implies 
the existence of an action of~$N$ by affine isometries on~$\ell^\Phi(N)$ without fixed points. 
Then, by \cite[Theorem~7.1]{BFGM}, since $\ell^\Phi(N)$ is a uniformly convex $N$-module 
and the left regular representation of~$N$ on~$\ell^\Phi(N)$
does not have almost invariant vectors, there must exist a nonzero invariant vector for
$\lambda_N|_{G_1}$ and $\lambda_N|_{G_0}$. This is a~contradiction to what was said above.
Thus, (\ref{cohom-nontr}) fails and $H^{1}(N, \ell^{\Phi}(N)) = 0$. Consequently,
by Proposition~\ref{cohom-rest}, $H^{1}(N, \ell^{\Phi}(G)|_N) = 0$. Then, 
by Proposition~\ref{rest}, $H^{1}(G, \ell^{\Phi}(G)) = 0$.
   
The theorem is proved.
\end{proof}

\smallskip
The authors are grateful to Prof. Anna Kami{\'n}ska for sending her papers about discrete
Orlicz spaces and to Dr.~Viorica Montreanu for providing them with a collection of articles 
on the $\Phi$-Laplacian in the ``continuous'' case.


\begin{thebibliography}{99}


\bibitem{BFGM} 
U.~Bader, A.~Furman, T.~Gelander, and N.~Monod,
Property (T) and rigidity for actions on Banach spaces. 
{\it Acta Math.} {\bf 198} (2007), no.~1, 57--105.

\bibitem{BeHaV08}
B.~Bekka, P.~de la Harpe, and A.~Valette,
{\it Kazhdan's Property (T)}, Cambridge: Cambridge Univ. Press, 2008.

\bibitem{BMV}
M.~Bourdon, F.~Martin, and A.~Valette, Vanishing and non-vanishing
for the first $L^p$-cohomology of groups, {\it Comm. Math. Helv.}, {\bf 80} (2005),
no.~2, 377--389.

\bibitem{Bro82}
K.~Brown,
{\it Cohomology of Groups}, New York--Heidelberg--Berlin: Springer-Verlag (1982). 

\bibitem{CoulSC95}
T.~Coulhon and L.~Saloff-Coste, 
Vari\'et\'es riemanniennes isom\'etriques \`a l'infini,
{\it Rev. Mat. Iberoam.} \textbf{11}, no.~3, 687--726 (1995). 

\bibitem{EkTem}
I.~Ekeland and R.~T\'emam,
{\it Convex Analysis and Variational Problems}, Classics in Applied Mathematics. 28. 
Philadelphia, PA: Society for Industrial and Applied Mathematics (1999).

\bibitem{Gui80}
A.~Guichardet,
{\it Cohomologie des Groupes Topologiques et des Alg\`ebres de Lie}, Paris: Cedic/Nathan, 1980. 

\bibitem{KamMus89}
A.~Kami{\'n}ska and J.~Musielak,
On convolution operator in Orlicz spaces, {\it Rev. Mat. Univ. Complutense Madr.} \textbf{2}, 
Suppl. (1989), 157--178.

\bibitem{Kop13}
Ya.~Kopylov, 
Amenability of closed subgroups and Orlicz spaces, {\it Sib. \'Elektron. Mat. Izv.} 
\textbf{10} (2013), 183--190.

\bibitem{KraRu}
M.~A.~Krasnosel$'$ski{\u\i} and Ya.~B.~Rutickii,
{\it Convex Functions and Orlicz Spaces}, 
Groningen: P.~Noordhoff Ltd,  1961.

\bibitem{Kut}
S.~S.~Kutateladze, 
{\it Fundamentals of Functional Analysis}, Dordrecht: Kluwer Academic Publishers, 1995.  

\bibitem{LeMo09}
V.~K.~Le and D.~Montreanu,
On nontrivial solutions of variational-hemivariational inequalities with slowly growing 
principal parts, {\it Zeit. Anal. Anwend.} \textbf{28}, no.~3, 277-–293 (2009).

\bibitem{LeMoMo10}
V.~K.~Le, D.~Motreanu, and V.~V. Motreanu,
On a non-smooth eigenvalue problem in Orlicz--Sobolev spaces, {\it Appl. Anal.},
\textbf{89} (2010), no.~2, 229--242.

\bibitem{Lub}
A.~Lubotzky, {\it Discrete Groups, Expanding Graphs and Invariant Measures}, 
Basel: Birkhauser (2010).

\bibitem{MV07}
F.~Martin and A.~Valette,
On the first $L^p$-cohomology of discrete groups, {\it Groups Geom. Dyn.} \textbf{1} (2007),
no. 1, 81--100. 

\bibitem{Mus83}
J.~Musielak,
{\it Orlicz Spaces and Modular Spaces}, Lecture Notes in Mathematics, 1034. 
Berlin etc.: Springer-Verlag, 1983.

\bibitem{Puls06}
M.~Puls, 
The first $L^p$-cohomology of some finitely generated groups and $p$-harmonic functions,
{\it J. Funct. Anal.} {\bf 237} (2006), no.~2, 391--401.

\bibitem{Rao68}
M.~M.~Rao,
Almost every Orlicz space is isomorphic to a strictly convex Orlicz space, 
{\em Proc. Am. Math. Soc.}, \textbf{19} (1968), 377--379.

\bibitem{Ra04a}
M.~M.~Rao,
Convolutions of vector fields. III: Amenability and spectral properties,
in: {\em Rao, M. M. (ed.), Real and Stochastic Analysis. New Perspectives}, 
Trends in Mathematics, Birkh\"auser, Boston, MA (2004), 375--401. 

\bibitem{RaRen91}
M.~M.~Rao and Z.~D.~Ren,
{\em Theory of Orlicz Spaces.} Pure and Applied Mathematics, 146. 
New York etc.: Marcel Dekker, Inc., 1991.

\bibitem{RaRen02}
M.~M.~Rao and Z.~D.~Ren,  
{\em Applications of Orlicz Spaces}, Pure and Applied Mathematics, 250. 
New York, NY: Marcel Dekker. 2002.

\end{thebibliography}
\end{document}